\newtheorem{theorem}{Theorem}[section]
\newtheorem{lemma}[theorem]{Lemma}
\newtheorem{proposition}[theorem]{Proposition}
\theoremstyle{definition}
\theoremstyle{remark}
\newtheorem{remark}[theorem]{Remark}
\numberwithin{equation}{section}
 \def\iu{\mathrm{i}}
\def\eps{\varepsilon}
\def\d{\mathrm{d}}
\def\e{{\mathrm e}}
\def\wt{\widetilde}
\def\wh{\widehat}
\def\ecl{\color{black}}
\begin{document}

\title[Weighted finite difference methods for semiclassical NLS]{Weighted finite difference methods for the semiclassical nonlinear Schr\"odinger equation with multiphase oscillatory initial data}

\author[Y.\ Shi]{Yanyan Shi}
\address{Mathematisches Institut, Univ.\ T\"ubingen, D-72076 T\"ubingen, Germany}
\curraddr{}
\email{shi@na.uni-tuebingen.de}
\thanks{}

\author[Ch.\ Lubich]{Christian Lubich}
\address{Mathematisches Institut, Univ.\ T\"ubingen, D-72076 T\"ubingen, Germany}
\curraddr{}
\email{lubich@na.uni-tuebingen.de}
\thanks{This work was supported by the Deutsche Forschungsgemeinschaft (DFG, German Research Foundation) -- Project-ID258734477 -- SFB 1173.}


\date{}

\dedicatory{}

\begin{abstract}
This paper introduces weighted finite difference methods for numerically solving dispersive evolution equations with solutions that are highly oscillatory in both space and time. We consider a
semiclassically scaled cubic nonlinear Schr\"odinger equation with  highly oscillatory initial data, first in the single-phase case and then in the general multiphase case. The proposed methods do not need to resolve high-frequency oscillations in both space and time by prohibitively fine grids as would be required by standard finite difference methods.  The approach taken here modifies traditional finite difference methods by  appropriate exponential weights. Specifically, we propose the weighted leapfrog and weighted Crank--Nicolson methods, both of which achieve second-order accuracy with time steps and mesh sizes that are not restricted in magnitude by the small semiclassical parameter. Numerical experiments illustrate the theoretical results.
\bigskip

\noindent
{\it Keywords. \rm  Finite difference method, cubic nonlinear Schr\"odinger equation, semiclassical scaling, highly oscillatory, modulated Fourier expansion, Wiener algebra,  stability, error bound, asymptotic-preserving, uniformly accurate}

\bigskip

\it\noindent
Mathematics Subject Classification (2020): \rm\, 65M06, 65M12, 65M15
\end{abstract}

\maketitle



\section{Introduction}
As a basic model problem of a dispersive evolution equation with solutions that are highly oscillatory in both space and time, we consider the time-dependent weakly nonlinear Schr\"odinger equation in semiclassical scaling~\cite{carles2008semi,carles2010multiphase},
\begin{equation}\label{eq:schr}
\iu\eps\,\partial_t u+\frac{\eps^2}{2} \Delta u=\eps\lambda  \,|u|^2u,
\end{equation}
where $0<\eps\ll 1$ is the small semiclassical parameter, and $\lambda$ is a fixed nonzero real number. 

This equation is to be solved for the complex-valued function $u=u(t,x)$ 
under periodic boundary conditions with $x\in \mathbb{T}^d=(\mathbb{R}/2\pi\mathbb{Z})^d$ over a bounded time interval ${0\le t\le T}$. The final time $T$ is chosen independently of $\eps$. On this time scale, the nonlinearity has an $\mathcal{O}(1)$ effect on the solution. 

We consider highly oscillatory initial data at $t=0$. A simple model problem is the case of a {\it single phase} (also known as the monochromatic case) considered in the first part of this paper,
\begin{equation}\label{eq:init-1}
u(0,x)=a^0(x)\,\e^{\iu\kappa\cdot x/\eps},
\end{equation}
where $\kappa \in\mathbb{R}^d\backslash \{0\}$ is a  fixed wave vector, and 
$a^0:\mathbb{T}^d\rightarrow \mathbb{C}$ is a given profile function that is assumed to be {\it smooth} in the sense of having arbitrarily many
higher-order derivatives bounded independently of $\eps$. 

The initial function $u(0,\cdot)$ in \eqref{eq:init} is required to be a $2\pi$-periodic continuous function. This is satisfied if
the small parameter $\eps$ is assumed to take only values for which $\kappa/\eps \in \mathbb{Z}^d$ and $a^0$ are $2\pi$-periodic. This assumption on $\eps$ is not a restriction, since it can always be achieved with an 
$\mathcal{O}(\eps)$ modification of $\kappa$ and a corresponding smooth modification of $a^0$.

The single-phase problem \eqref{eq:schr}--\eqref{eq:init-1} provides much insight into the construction and analysis of the proposed numerical methods and so prepares the ground for the numerical treatment of the more challenging {\it multiphase problem} with initial data
\begin{equation}\label{eq:init}
u(0,x)=\sum_{m=1}^M a^0_m(x)\,\e^{\iu\kappa_m\cdot x/\eps},
\end{equation}
where $\kappa_m \in\mathbb{R}^d\backslash \{0\}$ are  fixed wave vectors, and 
$a_m^0:\mathbb{T}^d\rightarrow \mathbb{C}$ are given smooth profile functions. 
This is considered in the second part of the paper, first illustrated by the two-phase case $M=2$ with opposite wave numbers $\kappa_1=\kappa$ and $\kappa_2=-\kappa$ and then extended to the general multiphase case. 

An analytical study of the multiphase problem was given by Carles, Dumas \& Sparber \cite{carles2010multiphase}, where $\mathcal{O}(\eps)$ approximations to the solution are constructed that are of the form \eqref{eq:init} at every time. Here we prove a refined result (Theorem~\ref{th:MFE}) that provides a second-order expansion of the solution with an $\mathcal{O}(\eps^2)$ error in the maximum norm. As in \cite{carles2010multiphase}, we derive error bounds in the stronger norm of the Wiener algebra, which is particularly suitable for handling the nonlinearity. The improved analytical approximation result is basic for the numerical analysis.

The solution $u(t,x)$ is highly oscillatory in both time and space at a scale proportional to the small parameter $\eps$. This poses significant challenges in the development of efficient numerical methods and their error analysis. 
Traditional finite difference methods like the leapfrog and Crank--Nicolson schemes have been studied for Schr\"odinger equations in the semiclassical scaling in~\cite{markowich1999numerical}, where stringent restrictions on the time step $\tau\ll\eps$ and mesh size $h\ll\eps$ are required. Time-splitting spectral discretizations, also known as split-step Fourier methods~\cite{bao2002time,bao2003numerical,jin2011mathematical,lasser2020computing}, ease these restrictions. For the stated initial value problem \eqref{eq:schr}--\eqref{eq:init-1}, using techniques as in the cited papers, split-step Fourier methods can be shown to require no bound of $\tau$ in terms of $\eps$, but they still require small $h=\mathcal{O}(\eps/|\log \eps|)$ to obtain at least first-order accuracy in $h$, as is already needed for the approximation of the initial data by trigonometric interpolation.
Asymptotic-preserving methods have been proposed in~\cite{arnold2011wkb,degond2007asymptotic,besse2013asymptotic} by  reformulating the Schr\"odinger equation  using the WKB expansion~\cite{grenier1998semiclassical,carles2008semi} or the Madelung transform~\cite{madelung1927quantum}. 

One objective of this paper is to revive finite difference methods for dispersive evolution equations with solutions that are highly oscillatory in both space and time, modifying standard methods such as leapfrog and Crank--Nicolson methods by changing the method coefficients on the same stencil.
{\it Such weighted schemes enable us to approximate the solution of \eqref{eq:schr}--\eqref{eq:init} with second-order accuracy even when using time steps $\tau$ and mesh sizes $h$ that are not restricted by $\eps$.} Under mild assumptions, we prove an $\mathcal{O}(\tau^2+h^2+\eps^2)$ error bound for the multiphase problem (Theorem~\ref{th:conv-m}).
The proposed weighted methods tend to the standard leapfrog and Crank--Nicolson schemes as the ratios of the time step and mesh size to the semiclassical parameter $\eps$ approach zero, which is, however, not the regime of principal interest in this paper. 
The methods can be extended to be not only asymptotic-preserving as $\eps\to 0$ but also uniformly accurate (of order $4/5$ for the multiphase problem) for $0<\eps\le 1$.

Modulated Fourier expansions are a powerful tool for deriving and analyzing numerical methods for highly oscillatory problems. They represent both the exact and the numerical solution as sums of products of slowly varying modulation functions and highly oscillatory exponentials, as is given here with the initial data \eqref{eq:init}.  Comparing the modulated Fourier expansions of the numerical and the exact solution then yields asymptotically sharp error bounds; see~\cite[Chapter XIII]{hairer02gni} and, e.g., \cite{hairer22large} for oscillatory ordinary differential equations and, e.g.,
\cite{cohen08coe,faou14pws} for evolutionary partial differential equations. We will pursue such an approach also here, as a novelty combined in both time and space.  For the single-phase problem and in some cases, depending on the numerical treatment of the nonlinearity, also for the multiphase problem, the proposed weighted finite difference methods can be reinterpreted as applying the corresponding standard finite difference schemes to the equations for the non-oscillatory modulation functions of the modulated Fourier expansion (here considered up to order 2). The approach of numerically approximating the modulation functions has previously been used in the literature for oscillatory ordinary differential equations, e.g., in \cite{cohen04diss,condon09oho}, and later for temporally (though not spatially) oscillatory partial differential equations, e.g., in \cite{bao14aua,faou14aps}.

We will formulate the weighted finite difference methods 
only in the spatially one-dimensional case ($d=1$). This apparent limitation is introduced only for ease of presentation. The methods and theoretical results extend directly to higher dimensions. Furthermore, the extension to the full space $\mathbb{R}^d$ instead of the torus $\mathbb{T}^d$ is straightforward for the formulation of the methods and can be done analogously in the theory. 

The paper is organized as follows.

In Section \ref{sec:algo}, we introduce the weighted leapfrog and weighted Crank–Nicolson algorithms for a single initial phase, with stepsizes $\tau$ and meshwidths $h$ that can be arbitrarily large compared to $\eps$. For $h \gg \eps$, there is a mild stepsize restriction $\tau \le c h$ for the weighted leapfrog method and no such restriction for the weighted Crank–Nicolson method.

In Section \ref{sec:error}, Theorem~\ref{th:conv-1} states $\eps$-uniform $O(\tau^2+h^2)$ error bounds for the single-phase case for both numerical methods. Numerical experiments confirm these theoretical results. The proof of the error bound is provided in Sections \ref{sec:consistency} and \ref{sec:stability}.
In Section \ref{sec:consistency}, we study the consistency error, i.e., the defect obtained on inserting the exact solution into the numerical scheme. Section \ref{sec:stability} presents the linear Fourier stability analysis, which is done in the Wiener algebra $A(\mathbb{T}) \subset C(\mathbb{T})$, and then gives a nonlinear stability analysis that bounds the error
of the numerical solution in terms of the defect.

In Section \ref{sec:two}, we treat the case of two opposite phases. We formulate the modulated Fourier expansion of the exact solution and extend both the weighted leapfrog and weighted Crank–Nicolson methods to the two-phase case in several variants that differ in the treatment of the nonlinearity and in the attained accuracy. For the most accurate variant, we have an $\mathcal{O}(\tau^2+h^2+\eps^2)$ error bound. Numerical experiments illustrate the theory. 

In Section \ref{sec:multi}, we present the modulated Fourier expansion of the exact solution for general multiphase initial conditions \eqref{eq:init}, and we prove that the remainder term is of order $\mathcal{O}(\eps^2)$ in the maximum norm (Theorem~\ref{th:MFE}). 

In Section \ref{sec:multi-scheme}, we extend the weighted leapfrog and Crank-Nicolson methods to the general multiphase setting and prove an  $\mathcal{O}(\tau^2+h^2+\eps^2)$ error bound in Theorem~\ref{th:conv-m}, based on Theorems~\ref{th:conv-1} and~\ref{th:MFE}. We further obtain $\eps$-uniform convergence of order $4/5$ by combining the above error bound  with the standard error bound obtained from Taylor expansion of the solution, which is $\mathcal{O}{(\tau^2+h^2)/\eps^3}$. The latter bound is smaller only for $\tau^2 + h^2\lesssim \eps^5$, which is not the situation of main interest here, where we aim for large stepsizes $\tau\gg\eps$ and meshsizes $h\gg\eps$ for small $\eps$.

\bigskip
\centerline{\bf Part I. Case of a single phase}

\section{Weighted finite difference methods for a single initial phase}\label{sec:algo}
For simplicity of presentation, we restrict the presentation to the case of one spatial dimension, $0\leq x\leq 2\pi$, with periodic boundary conditions. The proposed numerical methods and their analysis extend to higher dimensions in a straightforward way.

\subsection{Preparation: Weighted finite differences of modulated exponentials}
\label{subsec:prep}
We expect that the solution to \eqref{eq:schr} with initial data \eqref{eq:init-1} can be approximated by a modulated plane wave
$$
v(t,x) = b(t,x)\, \e^{\iu(\kappa x-\omega t)/\eps},
\qquad\text{where}\quad
\omega = \tfrac12\kappa^2
$$
in view of the dispersion relation $\iu\eps(-\iu \omega/\eps) + \tfrac12\eps^2 (\iu\kappa/\eps)^2=0$
of the free linear Schr\"odinger equation, and $b(t,x)$ is a smooth modulation function with derivatives bounded independently of $\eps$. We then have
\begin{align*}
    \partial_t v(t,x) &= \Bigl( \partial_t - \frac{\iu\omega}\eps \Bigr) b(t,x) \cdot \e^{\iu(\kappa x-\omega t)/\eps},
    \\
    \partial_x^2 v(t,x) &= \Bigl( \partial_x + \frac{\iu\kappa}\eps \Bigr)^2 b(t,x) \cdot \e^{\iu(\kappa x-\omega t)/\eps}.
\end{align*}
We approximate the partial derivatives of $b$ by symmetric finite differences, with a temporal step size $\tau$ and a spatial grid size $h$, up to errors of $\mathcal{O}(\tau^2)$ and $\mathcal{O}(h^2)$ resulting from the Taylor expansion of the smooth function $b$ at $(t,x)$,
\begin{align*}
    \partial_t v(t,x) &\approx \biggl( \frac{b(t+\tau,x)-b(t-\tau,x)}{2\tau} - 
    \frac{\iu\omega}\eps \,b(t,x) \biggr)\e^{\iu(\kappa x-\omega t)/\eps}
    \\
    &= \frac{\e^{\iu\omega\tau/\eps} v(t+\tau,x)- \e^{-\iu\omega\tau/\eps} v(t-\tau,x)}{2\tau} - 
    \frac{\iu\omega}\eps \,v(t,x)
\end{align*}
and 
\begin{align*}
    &\partial_x^2 v(t,x) \approx
    \biggl( \frac{b(t,x+h)-2b(t,x)+b(t,x-h)}{h^2} + 
    2 \frac{\iu\kappa}\eps \,\frac{b(t,x+h)-b(t,x-h)}{2h} \biggr.
    \\
    &\qquad\qquad\qquad \biggl. - \frac{\kappa^2}{\eps^2}\, b(t,x)\biggr)
    \e^{\iu(\kappa x-\omega t)/\eps} =
    \\
    &\frac{(1+\iu \kappa h/\eps)\,\e^{-\iu \kappa h/\eps} v(t,x+h)- 2v(t,x) +
    (1-\iu \kappa h/\eps)\,\e^{\iu \kappa h/\eps} v(t,x+h)}{h^2} - 
    \frac{\kappa^2}{\eps^2} \,v(t,x).
\end{align*}
We will use the so obtained exponentially weighted finited differences in the numerical schemes to be proposed next. We further note that up to an $\mathcal{O}(\tau^2)$ error,
\begin{align*}
v(t,x) &= b(t,x)\, \e^{\iu(\kappa x-\omega t)/\eps} \approx 
\tfrac12 \bigl(b(t+\tau,x)+b(t-\tau,x)\bigr)\, \e^{\iu(\kappa x-\omega t)/\eps} 
\\
&= \tfrac12 \bigl(\e^{\iu\omega\tau/\eps} v(t+\tau,x)+\e^{-\iu\omega\tau/\eps}v(t-\tau,x)\bigr).
\end{align*}

\subsection{Exponentially weighted leapfrog algorithm.} 
\label{subsec:wlf}
Let the time step be $\tau=T/N>0$  and the mesh size $h=2\pi/M>0$, where $N$ and $M$ are positive integers. We denote by
$u^{n}_{j}$ the numerical approximation of $u(t_n,x_j)$, where $t_n=n\tau$ for $0\leq n\leq N$, and $x_j=jh$ for $0\leq j\leq M$. Using weighted finite differences as derived above, we introduce an explicit algorithm, which has the symmetric two-step formulation
\begin{equation}\label{eq:scheme}
\begin{aligned}
\iu\eps\,\frac{\e^{\iu\alpha}u_j^{n+1}-\e^{-\iu\alpha}u_j^{n-1}}{2\tau}
+\frac{\eps^2}{2}\, \frac{\e^{-\iu\beta}(1+\iu\beta)u^n_{j+1}-2u^n_j+\e^{\iu\beta}(1-\iu\beta)u^n_{j-1}}{h^2}\\
=\eps\lambda|u^n_j|^2u^n_j
\end{aligned}
\end{equation}
with 
\[
\alpha=\frac{\omega\tau}\eps, \quad \beta=\frac{\kappa h}\eps.
\]
Note that the terms $\omega u^n_j$ and $-\tfrac12 \kappa^2 u^n_j$, which would appear in the weighted finite difference approximations to $\iu\eps \partial_t u(t_n,x_j)$ and $\tfrac12 \eps^2 \partial_x^2 u(t_n,x_j)$, respectively, cancel thanks to the dispersion relation $\omega=\tfrac12 \kappa^2$.

The weighted leapfrog scheme tends to the classical leapfrog scheme in the limit $\tau/\eps\to 0$ and $h/\eps\to 0$. Our main interest here is, however, to use the weighted scheme with large ratios $\tau/\eps$ and $h/\eps$. 

For the weighted leapfrog method we need the following CFL-type condition. 

\medskip\noindent
{\it Stability condition:} 
    \begin{equation}\label{eq:stability}
        \eps\tau < h^2 /\gamma 
        \qquad\text{with}\quad \gamma= \gamma(\beta)= 1+\max(|\beta|,1). 
    \end{equation}
    
    \noindent
    Equivalently, $\alpha/\beta^2 <1/(2\gamma)$. 
    For large $\beta$ we note
    $1/\gamma \approx 1/|\beta|= \eps/|\kappa h|$. This yields the condition $\tau <  h/|\kappa|$,
    which is the CFL condition for the advection equation $\partial_t a + \kappa \partial_x a =0$. 
    
    On the other hand, for small $|\beta|$, \eqref{eq:stability} becomes the CFL condition $\eps\tau< \tfrac12 h^2$ of the classical unweighted leapfrog method applied to \eqref{eq:schr}, 
    which in our highly oscillatory situation requires in addition $\tau\ll\eps$ and $h\ll\eps$ to have a small consistency error.

    As a starting step, we use a step of the weighted explicit Euler method
\begin{equation}\label{eq:scheme-starting}
\begin{aligned}
\iu\eps\,\frac{\e^{\iu\alpha}u_j^{1}-u_j^{0}}{\tau}
+\frac{\eps^2}{2}\, \frac{\e^{-\iu\beta}(1+\iu\beta)u^0_{j+1}-2u^0_j+\e^{\iu\beta}(1-\iu\beta)u^0_{j-1}}{h^2}
=\eps\lambda|u^0_j|^2u^0_j,
\end{aligned}
\end{equation}
with initial data $u^0_j=u(0,x_j)$ given by \eqref{eq:init-1}.
\subsection{Exponentially weighted Crank--Nicolson algorithm.} 
\label{subsec:wcn}
We further present the following implicit scheme:
    \begin{equation}\label{eq:CN}
    \begin{aligned}
\iu\eps\,\frac{\e^{\iu\alpha}u_j^{n+1}-\e^{-\iu\alpha}u_j^{n-1}}{2\tau}
+\frac{\eps^2}{2}\, \frac{\e^{-\iu\beta}(1+\iu\beta)\tilde{u}^n_{j+1}-2\tilde{u}^n_j+\e^{\iu\beta}(1-\iu\beta)\tilde{u}^n_{j-1}}{h^2}\\
=\eps\lambda\frac{(|u^{n-1}_j|^2+|u^{n+1}_j|^2)\tilde{u}^n_j}{2}
\end{aligned}
    \end{equation}
    with $\tilde{u}^n_j={(\e^{\iu\alpha}u^{n+1}_j+\e^{-\iu\alpha}u^{n-1}_j)}/{2}$. 
Scheme \eqref{eq:CN} implicitly gives the map $u^{n-1}\mapsto u^{n+1}$, since $u^n$ does not appear; using half the time step $\tau\rightarrow{\tau/2}$,  it can be written and implemented as a one-step method $u^n\mapsto u^{n+1}$.

Note that as $\tau/\eps\to 0$ and $h/\eps\to 0$, this scheme tends to the classical Crank--Nicolson scheme. We are, however, interested in using the weighted scheme with large ratios $\tau/\eps$ and $h/\eps$.

No stability condition is needed for the weighted Crank--Nicolson algorithm.
    
\ecl

\section{Error bound and numerical experiments}\label{sec:error}
Writing the exact solution of \eqref{eq:schr} as
\begin{equation} \label{eq:ua}
u(t,x)=a(t,x)\,\e^{\iu (\kappa x-\omega t)/ \eps} \quad\text{ with }\ \omega=\tfrac12 \kappa^2,
\end{equation}
we find, on inserting this function $u$ into the Schrödinger equation \eqref{eq:schr}, that $a(t,x)$ solves the advected nonlinear Schr\"odinger equation 
 \begin{equation}\label{eq:a}
    \partial_t a+ \kappa\,\partial_x a -\frac{\iu\eps}{2}\partial_x^2 a = -\iu\lambda |a|^2a, \quad\ a(0,x)=a^0(x),
    \end{equation}
with initial data $a^0$ that are assumed to be smooth in the sense of having arbitrarily many partial derivatives bounded independently of $\eps$.
By standard arguments, the solution $a(t,x)$ of \eqref{eq:a} is then also smooth on any closed time interval $0\le t \le T$ with $T$ smaller than a possible blowup time.

Our first main result  shows that the dominant oscillatory term of the numerical solution of \eqref{eq:scheme} and \eqref{eq:CN} is the same as for the exact solution, and it provides a second-order error bound in the maximum norm that is uniform in $\eps$.


\begin{theorem}[$\eps$-uniform second-order convergence in the maximum norm]\label{th:conv-1}
    Let $u^n_j$ be the numerical solution obtained by applying the weighted leapfrog algorithm \eqref{eq:scheme} under the stability condition \eqref{eq:stability} or by the weighted Crank--Nicolson method \eqref{eq:CN} without requiring a stability condition. Assume \eqref{eq:ua} with $a\in C^4([0,T]\times \mathbb{T})$ having fourth-order partial derivatives bounded independently of $\eps$.
 Then, the numerical solution $u^n_j$ can be written as 
    \[
    u^n_j=a(t_n,x_j)\,\e^{\iu (\kappa x_j-\omega t_n)/ \eps}+e^n_j = u(t_n,x_j)+e^n_j
    \]
    for $t_n=n\tau\le T$, $x_j=jh$, where
    $a(t,x)$ is the solution of \eqref{eq:a} and
    the error is bounded in the maximum norm by
    \[
    \max_{n,j} |e^n_j | \leq C (\tau^2+h^2).
    \]
    Here, $C$ is independent of $\tau, h$ and $0<\eps\le 1$, but depends on the final time $T$ and on
    $\theta=\gamma\eps\tau/h^2<1$ with $\gamma$ of \eqref{eq:stability} in the case of the weighted leapfrog method.
\end{theorem}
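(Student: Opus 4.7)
My plan is to split the analysis into a consistency estimate and a discrete stability estimate, exploiting the key algebraic identity from Section~\ref{subsec:prep}: inserting the ansatz $v(t,x)=b(t,x)\,\e^{\iu(\kappa x-\omega t)/\eps}$ into the weighted finite differences reproduces, up to $\mathcal{O}(\tau^2)$ and $\mathcal{O}(h^2)$ Taylor remainders, the \emph{ordinary} symmetric finite differences of the smooth modulation function $b$. Since the exact solution is $u=a\,\e^{\iu(\kappa x-\omega t)/\eps}$ with $a$ satisfying the advected nonlinear Schr\"odinger equation~\eqref{eq:a} whose fourth derivatives are bounded independently of $\eps$, all Taylor expansions will be carried out on $a$ rather than on $u$, avoiding the dangerous $1/\eps$ factors that would otherwise arise from differentiating the plane wave. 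This reduces the error analysis to that of a standard leapfrog (resp.\ Crank--Nicolson) scheme applied to an advection--dispersion-type equation with $\eps$-uniformly smooth data.

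For the consistency step I would insert $u(t_n,x_j)=a(t_n,x_j)\,\e^{\iu(\kappa x_j-\omega t_n)/\eps}$ into the scheme~\eqref{eq:scheme}. The unimodular weights $\e^{\pm\iu\alpha}$ and $\e^{\pm\iu\beta}$ turn the weighted differences into standard symmetric differences of $a$ modulated by the plane wave, and the dispersion relation $\omega=\tfrac12\kappa^2$ cancels the algebraic contribution of the frozen frequencies, leaving precisely the advective and dispersive operators of~\eqref{eq:a}. Taylor expansion of $a$ up to order four at $(t_n,x_j)$ then yields a defect $d^n_j$ bounded by $C(\tau^2+h^2)$ in the maximum norm, with a constant $C$ depending only on the $\eps$-uniform bounds of the fourth partial derivatives of $a$. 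The Crank--Nicolson case is analogous: the symmetric averaging $\tilde u^n_j$ and the trapezoidal treatment of the cubic nonlinearity both introduce only second-order Taylor remainders acting on $a$.

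For the stability analysis I would pass to the Fourier side on $\mathbb{Z}$ and work in the Wiener algebra $A(\mathbb{T})$. After factoring out the plane wave and the exponential weights, the linearised weighted leapfrog reduces to a standard leapfrog for a shifted advection--dispersion symbol, whose two-step amplification matrix has unimodular eigenvalues precisely under the CFL-type condition~\eqref{eq:stability}. This gives power-boundedness of the discrete linear evolution in $A(\mathbb{T})$ with a constant depending only on $\theta=\gamma\eps\tau/h^2<1$. The weighted Crank--Nicolson, being a Cayley-type rotation, is unconditionally power-bounded. I would then use the Banach-algebra property of $A(\mathbb{T})$ to handle the cubic nonlinearity by a local Lipschitz bound around the bounded exact modulation $a$, bootstrap an a priori bound on the numerical modulation, and close the argument with a discrete Gronwall inequality driven by the consistency defect. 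The embedding $A(\mathbb{T})\hookrightarrow C(\mathbb{T})$ then delivers the required maximum-norm error estimate.

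The main obstacle I anticipate is obtaining a genuinely $\eps$-\emph{uniform} stability constant for the weighted leapfrog under only the mild condition $\theta<1$: because the two-step amplification matrix has eigenvalues on the unit circle, its diagonalising transformation degenerates at the marginal Fourier frequencies, and one must extract a norm bound that does not blow up as modes approach the CFL limit. I expect this to require either a symmetrising energy functional in the spirit of leapfrog's discrete conservation or a careful normal-form estimate uniform in the symbol. A minor additional technicality is that the starting step~\eqref{eq:scheme-starting} is only first-order accurate per step, but a single explicit-Euler step contributes only $\mathcal{O}(\tau^2)$ to the global error, which is compatible with the claimed bound.
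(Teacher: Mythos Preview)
Your proposal is correct and follows essentially the same route as the paper: consistency by Taylor-expanding the smooth modulation $a$ (not $u$), linear stability via Fourier diagonalisation of the two-step amplification matrix in the Wiener algebra with a norm equivalence constant depending on $\theta$, and a nonlinear Gronwall argument exploiting the Banach-algebra property of $A(\mathbb{T})$. The paper resolves exactly the obstacle you anticipate---degeneration of the diagonalising transformation near the CFL limit---by bounding the condition number of the eigenvector matrix $P_k$ uniformly in $k$ via $\|P_k^{-1}\|_2\le 1/\sqrt{2(1-\theta)}$.

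One point to tighten: you state the defect bound in the maximum norm, but since your stability estimate lives in $A(\mathbb{T})$, the defect must be controlled there as well; the paper flags this explicitly (the $C(\mathbb{T})$ bound is ``too weak a norm'') and upgrades to $A(\mathbb{T})$ using the integral-form Taylor remainders for $a$ together with $\|f\|_{A(\mathbb{T})}\le c_1\|f\|_{C^1(\mathbb{T})}$. This is a straightforward refinement of your consistency step, not a change of strategy.
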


The proof will be given in the following two sections.

\begin{remark}
    The numerical scheme yields approximations to the oscillatory solution only at the grid points, with many oscillations between neighboring grid points when $\tau\gg\eps$ or $h\gg\eps$. An interpolant capturing these oscillations is readily obtained by interpolating the values 
    $a^n_j=u^n_j \e^{-\iu (\kappa x_j - \omega t_n)/\eps}$ that are $\mathcal{O}(\tau^2+h^2)$ approximations to the grid values of the smooth function $a(t,x)=u(t,x)\e^{-\iu (\kappa x - \omega t)/\eps}$.
\end{remark}

\begin{remark}
    The weighted leapfrog method \eqref{eq:scheme}
    for the Schrödinger equation \eqref{eq:schr} with single-phase oscillatory initial data \eqref{eq:init-1} turns out to be equivalent to applying the standard leapfrog method to the initial value problem \eqref{eq:a} with smooth initial data $a^0$. A possible proof of Theorem~\ref{th:conv-1} could be based on this observation. In the proof given below, we will not directly use this interpretation, since it does not simplify the analysis. It is, however, helpful to have both interpretations as approximations to both $u$ and $a$ in mind, as will become evident in the multiphase case. The same remark applies to the weighted Crank--Nicolson method \eqref{eq:CN}.
\end{remark}



\noindent{\bf{Numerical experiments.}}
In this numerical test, we consider the one-dimensional semiclassical nonlinear Schr\"odinger equation
\[
\iu\eps\partial_t u+\frac{\eps^2}{2}\partial_{xx}u= \eps |u|^2u
\]
with the initial condition
\[
 u(0,x)=\e^{-x^2}\e^{\iu x/\eps}.
\]
We set the spatial domain to $x \in [-6, 6]$ with periodic boundary conditions. The numerical error is measured at the final time $T = 0.5$ using the discrete $L^\infty$ norm over the domain $[-6, 6]$.
 
\begin{figure}[h]
\centerline{
\includegraphics[scale=0.5]{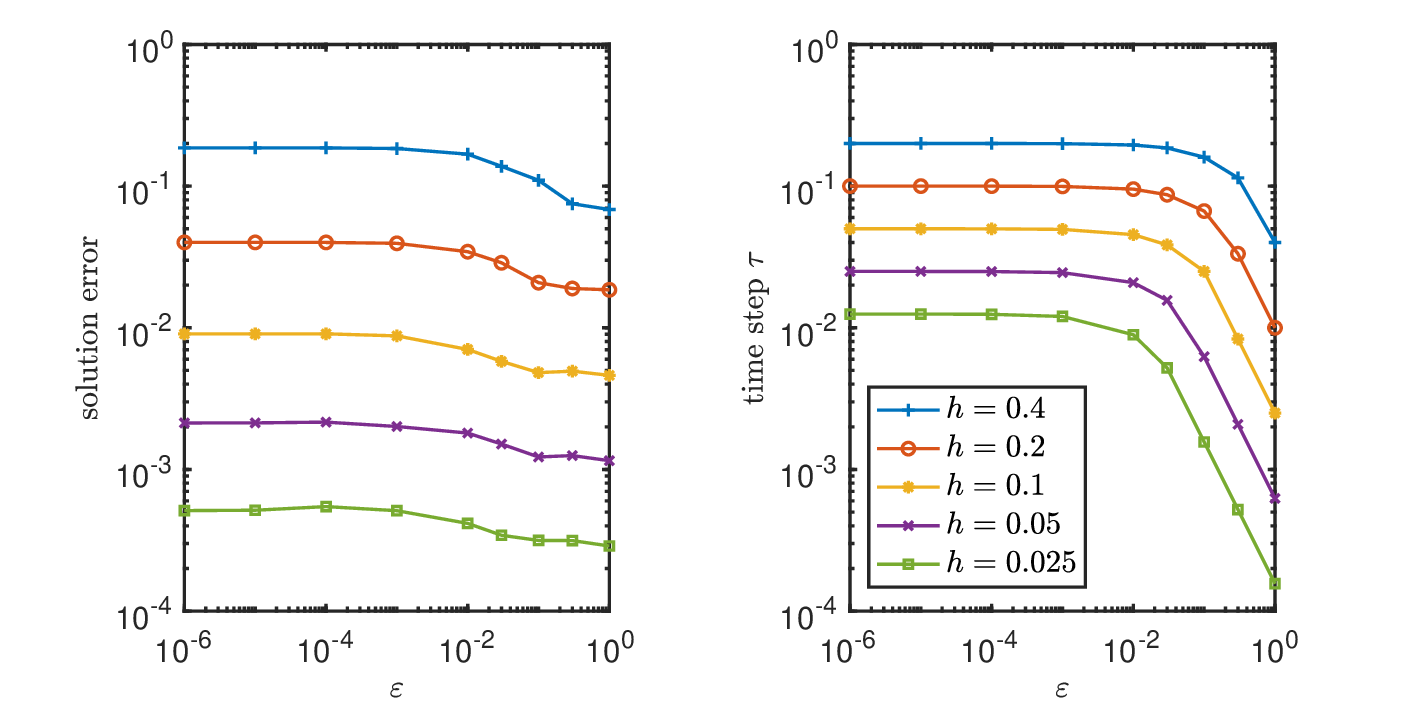}}
\centerline{
\includegraphics[scale=0.5]{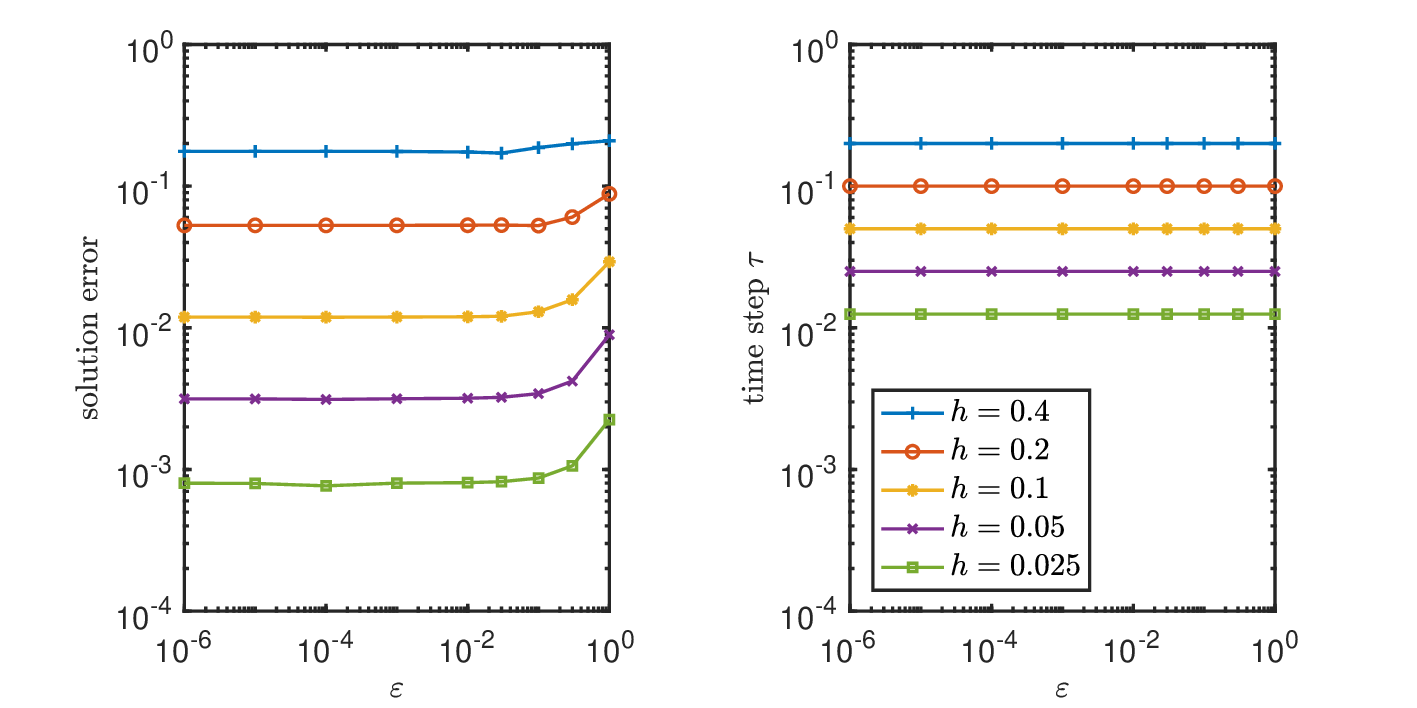}
}
\caption{Error and time stepsize vs.~$\eps$ with different $h$ for the weighted leapfrog method (top row) and weighted Crank-Nicolson method (bottom row).}
\label{fig:flf}
\end{figure}

 Figure~\ref{fig:flf} displays the absolute error in $u$ at time $T = 0.5$ plotted against $\eps$ for several fixed values of $h$. The top row corresponds to results obtained using the weighted leapfrog method. For all values of $\eps$, the error levels off at a value proportional to $h^2$. The corresponding time step $\tau$ as a function of $\eps$ is shown in the right panel. We 
 chose $\tau=\min(h/2,h^2/(2\eps\gamma))$ so that the stability condition \eqref{eq:stability} is satisfied for all $h$ and $\eps$.
 We observe that for large $\eps$, the time step $\tau$ scales with $h^2$, whereas for small~$\eps$, a linear dependence between $\tau$ and $h$ suffices. Similarly, we test the weighted Crank--Nicolson method with time step $\tau=h/2$ for all values of $\eps$, and the results, presented in the bottom row of Figure~\ref{fig:flf}, are consistent with the theoretical prediction in Theorem~\ref{th:conv-1}.

\section{Consistency}
\label{sec:consistency}
We consider the defect obtained on inserting $u(t,x)=a(t,x)\e^{\iu(\kappa x-\kappa^2t/2)/\eps}$ into the weighted leapfrog scheme \eqref{eq:scheme}, 
\begin{equation}
\begin{aligned}
d(t,x) := &\ \iu\eps\,\frac{\e^{\iu\alpha}u(t+\tau,x)-\e^{-\iu\alpha}u(t-\tau,x)}{2\tau}
\\&
+\ \frac{\eps^2}{2}\, \frac{\e^{-\iu\beta}(1+\iu\beta)u(t,x+h)-2u(t,x)+\e^{\iu\beta}(1-\iu\beta)u(t,x-h)}{h^2}\\
&-\ \eps\lambda|u(t,x)|^2 u(t,x),
\end{aligned} 
\label{eq:defect}
\end{equation}
again with $\alpha=\omega\tau/\eps$ and $\beta=\kappa h/\eps$.

\subsection{Defect bound in the maximum norm}

\begin{lemma} \label{lem:defect-max}
In the situation of Theorem~\ref{th:conv-1}, the defect \eqref{eq:defect} is bounded in the maximum norm by
$$
\| d \|_{C([0,T]\times \mathbb{T})} \le c\eps(\tau^2 + h^2),
$$
where $c$ is independent of  $\eps$, $\tau$, $h$ and $n$ with $t_n=n\tau\le T$.
\end{lemma}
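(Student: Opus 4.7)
The plan is to exploit the structure $u(t,x)=a(t,x)\,\e^{\iu(\kappa x-\omega t)/\eps}$ so that the tailored exponential weights in the scheme reduce the weighted finite differences to standard symmetric finite differences applied to the smooth amplitude $a$. Substituting the ansatz into \eqref{eq:defect} and using $\alpha=\omega\tau/\eps$ and $\beta=\kappa h/\eps$, the prefactors $\e^{\pm\iu\alpha}$ and $\e^{\mp\iu\beta}$ absorb the phase shifts of $u(t\pm\tau,x)$ and $u(t,x\pm h)$ exactly, so the entire defect equals $\e^{\iu(\kappa x-\omega t)/\eps}$ times an expression built from $a$ alone. Since the common exponential has modulus $1$, the maximum norm of $d$ equals that of this $a$-expression.

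With the phase factored out, the time contribution is $\iu\eps$ times the classical symmetric first difference of $a$, while the spatial stencil reduces to $\tfrac{\eps^2}{2}$ times
\[
\frac{(1+\iu\beta)\,a(t,x+h)-2\,a(t,x)+(1-\iu\beta)\,a(t,x-h)}{h^2}
\;=\; \Delta_h a(t,x)\,+\,\frac{2\iu\kappa}{\eps}\,D_h a(t,x),
\]
where $\Delta_h$ and $D_h$ denote the standard symmetric second and first differences in $x$. I would then Taylor-expand $a(t\pm\tau,x)$ and $a(t,x\pm h)$ about $(t,x)$; the $C^4$ hypothesis on $a$ with $\eps$-independent bounds lets me replace each difference by the corresponding partial derivative plus an explicit $\mathcal{O}(\tau^2)$ or $\mathcal{O}(h^2)$ correction.

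Collecting the leading terms produces the expression $\iu\eps(\partial_t a+\kappa\partial_x a)+\tfrac{\eps^2}{2}\partial_x^2 a-\eps\lambda|a|^2 a$, which vanishes identically by the amplitude equation \eqref{eq:a}. What remains is exactly
$\tfrac{\iu\eps\tau^2}{6}\partial_t^3 a + \tfrac{\iu\kappa\eps h^2}{6}\partial_x^3 a + \tfrac{\eps^2 h^2}{24}\partial_x^4 a$
plus fourth-order Taylor remainders, each of which is bounded by a constant times $\eps(\tau^2+h^2)$ uniformly in $0<\eps\le 1$ (using $\eps\le 1$ to absorb the extra power of $\eps$ in the $\partial_x^4$ term). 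Bounds on $\partial_t^3 a$ in terms of spatial derivatives of $a$, obtained by differentiating \eqref{eq:a}, follow from the same $C^4$-hypothesis.

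The main obstacle is bookkeeping rather than conceptual: one has to verify that no term coming from the product of $1\pm\iu\beta=1\pm\iu\kappa h/\eps$ with a Taylor correction of $a(t,x\pm h)$ produces a $1/\eps$ factor that would destroy $\eps$-uniformity. This is ensured precisely by the choice $\omega=\tfrac12\kappa^2$ encoded in the weights: the only $\eps$-singular contributions are exactly the ones that reassemble into the linear part of \eqref{eq:a} and thus cancel against the nonlinear term. Once this cancellation is identified, the defect bound is a routine $\eps$-uniform estimate in the $C^4$-norm of $a$.
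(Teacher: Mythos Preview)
Your proposal is correct and follows essentially the same approach as the paper: factor out the phase $\e^{\iu(\kappa x-\omega t)/\eps}$, reduce the weighted finite differences to standard symmetric differences acting on the smooth amplitude $a$, Taylor-expand, and cancel the leading terms via the amplitude equation \eqref{eq:a}. The paper's proof of this lemma is more terse---it invokes the weighted-difference identities derived in Section~\ref{subsec:prep} and cancels the leading part directly via the Schr\"odinger equation \eqref{eq:schr} for $u$ together with the dispersion relation $\omega=\tfrac12\kappa^2$---but your explicit amplitude computation is precisely what underlies that derivation, and is in fact the route the paper itself takes for the stronger Wiener-algebra bound in Lemma~\ref{lem:defect-wiener}.
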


\begin{proof}  The $\mathcal{O}(\tau^2)$ and $\mathcal{O}(h^2)$ error bounds of the weighted finite differences in Section~\ref{subsec:prep} yield, omitting the omnipresent argument $(t,x)$ on the right-hand side,
\begin{align*}
    d(t,x) = \Bigl(\iu\eps\,\partial_t u+\tfrac12\eps^2 \partial_x^2 u-\eps\lambda  \,|u|^2u \Bigr)
 - \Bigl(\omega u - \tfrac12 \kappa^2 u \Bigr) +O\bigl(\eps(\tau^2+h^2)\bigr).
\end{align*}
The terms in big brackets  vanish by the nonlinear Schrödinger equation \eqref{eq:schr} and the dispersion relation $\omega=\tfrac12 \kappa^2$. This proves the result.
\end{proof}

However, the maximum norm in the defect bound of Lemma~\ref{lem:defect-max} turns out to be too weak a norm for the proof of Theorem~\ref{th:conv-1}. 

\subsection{Defect bound in the Wiener algebra norm}
Let $A(\mathbb{T})$ be the space of $2\pi$-periodic complex-valued functions with absolutely convergent Fourier series $f(x)=\sum_{k=-\infty}^\infty \widehat f(k)\, \e^{\iu kx}$, equipped with the $\ell^1(\mathbb{Z})$ norm of the sequence of Fourier coefficients.  For the pointwise product of two functions $f,g\in A(\mathbb{T})$ we then have (see, e.g., \cite[Section I.6]{katznelson76})
\begin{equation}\label{eq:tri}
\| fg \|_{A(\mathbb{T})} \le \| f \|_{A(\mathbb{T})} \,\| g \|_{A(\mathbb{T})},
\end{equation}
which makes $A(\mathbb{T})$ a Banach algebra, known as the Wiener algebra. Note that the maximum norm of a function in $A(\mathbb{T})$ is bounded by its $A(\mathbb{T})$-norm, and conversely, the $A(\mathbb{T})$-norm is bounded by the maximum norm of the function and its derivative, see \cite[Section I.6]{katznelson76}:
    \begin{equation}\label{eq:katz}
    \| f \|_{C(\mathbb{T})} \le \| f \|_{A(\mathbb{T})} \quad\text{ and }\quad
    \| f \|_{A(\mathbb{T})} \le c_1\,\| f \|_{C^1(\mathbb{T})}.
    \end{equation}
The space $C([0,T],A(\mathbb{T}))$ is the Banach space of $A(\mathbb{T})$-valued continuous functions on the interval $[0,T]$, with $\| d \|_{C([0,T],A(\mathbb{T}))}=\max_{0\le t \le T} \| d(t,\cdot) \|_{A(\mathbb{T})}$.

\begin{lemma} \label{lem:defect-wiener}
In the situation of Theorem~\ref{th:conv-1}, the defect \eqref{eq:defect} is bounded in the Wiener algebra norm by
$$
\| d \|_{C([0,T],A(\mathbb{T}))} \le c\eps(\tau^2 + h^2),
$$
where $c$ is independent of  $\eps$, $\tau$, $h$, and $n$ with $t_n=n\tau\le T$.
\end{lemma}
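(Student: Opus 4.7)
The plan is to factor the highly oscillatory plane wave out of the defect, then to reduce the Wiener algebra norm estimate to one for the resulting smooth amplitude via the isometric action of the Fourier shift $\e^{\iu\kappa x/\eps}$ on $A(\mathbb{T})$.

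First I would substitute $u(t,x)=a(t,x)\,\e^{\iu(\kappa x-\omega t)/\eps}$ into \eqref{eq:defect}. The weights $\alpha=\omega\tau/\eps$ and $\beta=\kappa h/\eps$ are designed exactly so that
$\e^{\iu\alpha}u(t\pm\tau,x)=a(t\pm\tau,x)\,\e^{\iu(\kappa x-\omega t)/\eps}$
and
$\e^{\mp\iu\beta}(1\pm\iu\beta)u(t,x\pm h)=(1\pm\iu\beta)a(t,x\pm h)\,\e^{\iu(\kappa x-\omega t)/\eps}$,
so the common plane-wave factor pulls out and $d(t,x)=g(t,x)\,\e^{\iu(\kappa x-\omega t)/\eps}$. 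Invoking equation \eqref{eq:a} for $a$ to cancel $\eps\lambda|a|^2 a$ against the exact derivatives $\iu\eps\,\partial_t a$, $\iu\eps\kappa\,\partial_x a$ and $\tfrac{\eps^2}{2}\partial_x^2 a$, the amplitude defect reduces to
\begin{equation*}
g = \iu\eps\, r_t + \iu\eps\kappa\, r_1 + \tfrac{\eps^2}{2}\, r_2,
\end{equation*}
where $r_t$, $r_1$ and $r_2$ are the standard central-difference remainders for $\partial_t a$, $\partial_x a$ and $\partial_x^2 a$, respectively, evaluated on the smooth amplitude $a$.

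Second, since $\kappa/\eps\in\mathbb{Z}$ by the standing assumption on $\eps$, multiplication by $\e^{\iu\kappa x/\eps}$ merely shifts the Fourier indices of any function in $A(\mathbb{T})$, hence is an $A(\mathbb{T})$-isometry; the remaining factor $\e^{-\iu\omega t/\eps}$ has modulus one and is independent of $x$. Therefore
\begin{equation*}
\|d(t,\cdot)\|_{A(\mathbb{T})}=\|g(t,\cdot)\|_{A(\mathbb{T})}\qquad\text{for all }t\in[0,T].
\end{equation*}

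Third, I would bound $\|r_t\|_{A(\mathbb{T})}$, $\|r_1\|_{A(\mathbb{T})}$ and $\|r_2\|_{A(\mathbb{T})}$ by $O(\tau^2)$, $O(h^2)$ and $O(h^2)$, respectively. For this, I would write each remainder via Taylor's theorem with integral remainder, for instance
\begin{equation*}
r_2(t,x)=\frac{1}{h^2}\int_0^h(h-s)\int_0^s\bigl(\partial_x^3 a(t,x+r)-\partial_x^3 a(t,x-r)\bigr)\,dr\,ds,
\end{equation*}
and analogously for $r_1$ and $r_t$. Because the translation operators $f\mapsto f(\cdot\pm s)$ are $A(\mathbb{T})$-isometries, the Bochner triangle inequality together with $\|\partial_x^3 a(\cdot+r)-\partial_x^3 a(\cdot-r)\|_{A(\mathbb{T})}\le 2r\,\|\partial_x^4 a\|_{A(\mathbb{T})}$ yields $\|r_2(t,\cdot)\|_{A(\mathbb{T})}\le Ch^2\|\partial_x^4 a(t,\cdot)\|_{A(\mathbb{T})}$, and similar bounds involving $\|\partial_x^3 a\|_{A(\mathbb{T})}$ and $\|\partial_t^3 a\|_{A(\mathbb{T})}$ for $r_1$ and $r_t$. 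These higher-derivative $A(\mathbb{T})$-norms are finite uniformly in $\eps$ thanks to the smoothness of $a$ (inherited from $a^0$) and the embedding $\|\cdot\|_{A(\mathbb{T})}\le c_1\|\cdot\|_{C^1(\mathbb{T})}$ from \eqref{eq:katz}. Collecting the $\eps$-prefactors then gives $\|g(t,\cdot)\|_{A(\mathbb{T})}\le c\eps(\tau^2+h^2)$, proving the claim. The main obstacle is this step: the $A(\mathbb{T})$ norm is genuinely finer than the $C(\mathbb{T})$ norm, so Lemma~\ref{lem:defect-max} cannot be quoted directly, and a naive mean-value estimate on the remainders would lose a factor of $h$; the integral representation combined with the translation-isometry of $A(\mathbb{T})$ is what recovers the correct quadratic scaling.
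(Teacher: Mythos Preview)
Your argument is correct and follows essentially the same route as the paper's own proof: factor out the plane wave to reduce to the smooth amplitude defect $\wt d=g$, use that multiplication by $\e^{\iu\kappa x/\eps}$ is an $A(\mathbb{T})$-isometry, and bound the Taylor remainders in $A(\mathbb{T})$. The only minor difference is in the last step: the paper bounds $\|\wt d(t,\cdot)\|_{A(\mathbb{T})}$ in one stroke via the embedding $\|\cdot\|_{A(\mathbb{T})}\le c_1\|\cdot\|_{C^1(\mathbb{T})}$ applied directly to the integral remainder, whereas you first use translation isometry to reduce to $\|\partial_x^k a\|_{A(\mathbb{T})}$ and only then invoke \eqref{eq:katz}; the paper's route is a touch more direct and avoids needing an extra derivative on $a$.
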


\begin{proof} We define
\begin{align*}
 &\wt d(x,t) := d(x,t) \, \e^{-\iu(\kappa x-\omega t)/\eps}
 \\   
 &= \iu\eps\,\frac{a(t+\tau,x)-a(t-\tau,x)}{2\tau} 
 \\ 
 &\quad + \tfrac12\eps^2
 \biggl( \frac{a(t,x+h)-2a(t,x)+a(t,x-h)}{h^2} + 
    2 \frac{\iu\kappa}\eps \,\frac{a(t,x+h)-a(t,x-h)}{2h} \biggr)
 \\
    &\quad -\ \eps\lambda|a(t,x)|^2 a(t,x)
\end{align*}
and note that 
$$
\| \wt d(t,\cdot) \|_{A(\mathbb{T})} = \| d(t,\cdot) \|_{A(\mathbb{T})}.
$$
For the temporal finite difference we have by Taylor expansion 
$$
\frac{a(t+\tau,x)-a(t-\tau,x)}{2\tau} = \partial_t a(t,x) + \tau^2 R_\tau^{(1)}(t,x)
$$
with the continuously differentiable remainder in integral form,
$$
R_\tau^{(1)}(t,x)=\int_{-1}^1 \tfrac12 (1-|\theta|)^2 \,\partial_t^3 a(t+\theta\tau,x)\, d\theta,
$$
and similarly for the spatial finite differences with $\mathcal{O}(h^2)$ remainder terms in integral form. In view of the partial differential equation \eqref{eq:a} for $a$, this yields
$$
\wt d(t,x) = \iu\eps \tau^2 R_\tau^{(1)}(t,x) + \tfrac12\eps^2 h^2 R_h^{(2)}(t,x) + 
\iu\kappa\eps h^2 R_h^{(1)}(t,x)
$$
with continuously differentiable remainder terms, which have partial derivatives bounded independently of $\eps$, $\tau$ and $h$. So we obtain, uniformly for $0\le t \le T$,
$$
\| d(t,\cdot) \|_{A(\mathbb{T})} = \| \wt d(t,\cdot) \|_{A(\mathbb{T})} \le 
c_1 \, \| \wt d(t,\cdot) \|_{C^1(\mathbb{T})} \le c \eps(\tau^2 + h^2),
$$
which is the desired bound.
\end{proof}




\section{Stability}\label{sec:stability}

\subsection{Linear stability analysis in the Wiener algebra}\label{subsec:linear}
In this subsection we give linear stability results for the weighted leapfrog and Crank--Nicolson schemes. We bound numerical solutions corresponding to the linear Schrödinger equation \eqref{eq:schr} (without the nonlinearity) in the Wiener algebra norm, using Fourier analysis.

We momentarily omit the nonlinearity and interpolate the weighted leapfrog scheme \eqref{eq:scheme} from discrete spatial points $x_j=jh$ to arbitrary $x\in \mathbb{T}$ by setting 
\begin{align}
 \label{eq:scheme-x}
&\iu\eps\,\frac{\e^{\iu\alpha}u^{n+1}(x)-\e^{-\iu\alpha}u^{n-1}(x)}{2\tau}
\\ 
\nonumber
&+\frac{\eps^2}{2}\, \frac{\e^{-\iu\beta}(1+\iu\beta)u^n(x+h)-2u^n(x)+\e^{\iu\beta}(1-\iu\beta)u^n(x-h)}{h^2} =0.
\end{align}
We clearly have $u^n(x_j)=u^n_j$ of \eqref{eq:scheme} for all $n\ge 2$ if this holds true for $n=0$ and $n=1$. In particular, we have
$\max_j |u^n_j|\le \max_{x\in\mathbb{T}} |u^n(x)| \le \| u^n \|_{A(\mathbb{T})}$.

\begin{lemma}[Linear stability of the weighted leapfrog method]\label{th:stability}
    Under condition \eqref{eq:stability},
    the weighted leapfrog algorithm~\eqref{eq:scheme-x} without the nonlinear term is stable: There exists a norm $\vvvert\cdot\vvvert$
    on $A(\mathbb{T}) \times A(\mathbb{T})$,
    equivalent to the norm $\|\cdot\|_{A(\mathbb{T}) \times A(\mathbb{T})}$ uniformly in $\eps,\tau,h$ subject to the stability condition \eqref{eq:stability}, such that  
\[
\vvvert U^{n}\vvvert = \vvvert U^{n-1}\vvvert, \qquad\text{where}\ \ 
U^{n}=\begin{pmatrix}
        u^{n+1}\\u^{n}
    \end{pmatrix}.
\]
\end{lemma}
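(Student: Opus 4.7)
The plan is to exploit the observation in the second remark after Theorem~\ref{th:conv-1}: under the ansatz $u^n(x) = a^n(x)\,\e^{\iu(\kappa x-\omega t_n)/\eps}$, the weighted leapfrog scheme for $u$ is equivalent to the standard leapfrog scheme applied to the advection--dispersion equation $\partial_t a + \kappa\,\partial_x a - (\iu\eps/2)\partial_x^2 a = 0$. Multiplication by the unimodular exponential $\e^{\iu(\kappa x-\omega t_n)/\eps}$ is an isometry of $A(\mathbb{T})$ (since $\kappa/\eps\in\mathbb{Z}$ by our standing assumption), so any conserved norm on pairs $(a^{n+1},a^n)$ pulls back to one on $(u^{n+1},u^n)$. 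I therefore carry the analysis out for $a$.

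Next, I would apply the discrete spatial Fourier transform to decouple the scheme. For each mode $k\in\mathbb{Z}$, setting $\xi=kh$, the coefficient $\hat a^n(k)$ satisfies the scalar two-step recurrence
\[
\hat a^{n+1}(k) + 2\iu\tau\mu(\xi)\,\hat a^n(k) - \hat a^{n-1}(k) = 0, \qquad \mu(\xi) = \frac{\eps(1-\cos\xi)}{h^2} + \frac{\kappa\sin\xi}{h}.
\]
A short calculation gives $\max_\xi|\mu(\xi)| = (\eps/h^2)(1+\sqrt{1+\beta^2})$, and the stability condition \eqref{eq:stability} yields a uniform bound $\tau\sup_\xi|\mu(\xi)|\le\theta_0<1$ with $\theta_0$ depending only on $\theta=\gamma\eps\tau/h^2<1$. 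Writing $\tau\mu(\xi) = \sin\psi(\xi)$ with $\psi(\xi)\in[-\pi/2,\pi/2]$, the companion matrix
\[
A(\xi) = \begin{pmatrix} -2\iu\tau\mu(\xi) & 1 \\ 1 & 0 \end{pmatrix}
\]
has unit-modulus eigenvalues $\lambda_\pm(\xi) = -\iu\sin\psi(\xi) \pm \cos\psi(\xi)$, and its eigenvectors $(\lambda_\pm(\xi),1)^\top$ span $\mathbb{C}^2$ with determinant $2\cos\psi(\xi)\ge 2\sqrt{1-\theta_0^2}>0$.

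For each $\xi$ I then define an inner product on $\mathbb{C}^2$, induced from the standard one through the eigenbasis of $A(\xi)$, that makes $A(\xi)$ unitary; denote the associated norm by $\|\cdot\|_{*,\xi}$. The uniform lower bound on $\cos\psi(\xi)$ makes $\|\cdot\|_{*,\xi}$ uniformly equivalent to the Euclidean norm on $\mathbb{C}^2$. I then lift to a norm on $A(\mathbb{T})\times A(\mathbb{T})$ by
\[
\vvvert(v,w)\vvvert := \sum_{k\in\mathbb{Z}} \bigl\|(\hat v(k),\hat w(k))\bigr\|_{*,kh},
\]
which is preserved mode by mode, hence $\vvvert U^n\vvvert = \vvvert U^{n-1}\vvvert$. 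Pulling back via the $A(\mathbb{T})$-isometry $a\mapsto u$ yields the asserted invariant norm on $(u^{n+1},u^n)$.

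The main obstacle is controlling the equivalence constants uniformly in $\eps,\tau,h$: this reduces to a uniform lower bound on $\cos\psi(\xi)$, which is precisely the margin provided by the strict inequality in \eqref{eq:stability} through $\theta<1$. Once this is in place, the rest is bookkeeping with Fourier series, using only that multiplication by $\e^{\iu(\kappa x-\omega t)/\eps}$ preserves the $\ell^1$ structure of Fourier coefficients.
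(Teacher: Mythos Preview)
Your approach is essentially the paper's: pass to Fourier modes, obtain a $2\times 2$ companion recursion whose eigenvalues are unimodular under the CFL condition, diagonalize, and build the invariant norm as an $\ell^1$ sum of eigenbasis norms with uniform control on the change-of-basis matrices. The paper carries this out directly for $\hat u^n_k$ (its companion matrix $G_k$ contains factors $e^{\pm\iu\alpha}$); your detour through $a^n$ merely shifts the Fourier index by $\kappa/\eps$ and strips those phase factors, so your $\tau\mu(\xi)$ coincides with the paper's $-\mu_k$ after the shift.

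One step needs a small fix. The transformation $u^n\mapsto a^n$ involves the time-dependent factor $e^{\iu\omega t_n/\eps}$, so the isometry $(u^{n+1},u^n)\mapsto(a^{n+1},a^n)$ depends on $n$; pulling your $a$-space norm back literally therefore produces an $n$-dependent norm on $A(\mathbb{T})\times A(\mathbb{T})$, whereas the lemma asserts a single fixed norm with $\vvvert U^n\vvvert=\vvvert U^{n-1}\vvvert$. The repair is immediate: either work directly with $\hat u^n_k$ as the paper does, or pull back only through the $n$-independent spatial factor $e^{-\iu\kappa x/\eps}$ together with a fixed relative phase $e^{\iu\alpha}$ between the two components; the residual scalar phase $e^{-\iu n\alpha}$ common to both components then drops out of any homogeneous norm, and your diagonalization argument applies verbatim to the resulting ($n$-independent) companion matrix.
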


\begin{proof}
    Let $\hat{u}^n=(\hat{u}^n_k)$ be the sequence of Fourier coefficients of $u^n$, i.e., 
    \[
    u^n(x)=\sum_{k=-\infty}^{\infty}\e^{\iu k x}\,\hat{u}^n_k.
    \]
    Substituting this into \eqref{eq:scheme-x} yields, for all $j$,
    \begin{align*}
    \sum_{k}\e^{\iu k x_j}\Biggl(&\iu\eps\,\frac{\e^{\iu\alpha}\hat{u}^{n+1}_k-\e^{-\iu\alpha}\hat{u}^{n-1}_k}{2\tau}
    +\eps^2\,\frac{\gamma_k}{h^2}\,\hat{u}_k^n\Biggr)=0,
    \end{align*}
    where 
    \begin{align*}
    \gamma_k&=(\cos(\beta)+\beta\sin(\beta))\cos(kh)+\left(\sin(\beta)-\beta\cos(\beta)\right)\sin(kh)-1
    \\
    &= \cos(\beta-kh) + \beta \sin(\beta-kh)-1,
    \end{align*}
    which is bounded by
    $$
    |\gamma_k| \le \gamma :=1+\max(|\beta|,1) \quad\text{for all }k.
    $$
    We then have
    \begin{align*}
    \iu\eps\,\frac{\e^{\iu\alpha}\hat{u}^{n+1}_k-\e^{-\iu\alpha}\hat{u}^{n-1}_k}{2\tau}
    +\eps^2\,\frac{\gamma_k}{h^2}\,\hat{u}_k^n=0,
    \end{align*}
    which is equivalent to the system
    \[
    \begin{pmatrix}
        \hat{u}^{n+1}_k\\
        \hat{u}^{n}_k
    \end{pmatrix}
     =G_k\begin{pmatrix}
         \hat{u}^{n}_k\\
         \hat{u}^{n-1}_k
     \end{pmatrix},
    \]
    where
    \begin{align}\label{eq:matrix}
          &G_k=\begin{pmatrix}
        2\iu\mu_k\e^{-\iu\alpha} & \e^{-2\iu\alpha}\\
        1 & 0
    \end{pmatrix} 
    \quad \text{with }\ \mu_k=\frac{\eps\tau}{h^2}\,\gamma_k.
    \end{align}
    Let $\lambda_k^{+}, \lambda_k^{-}$ be the two roots of the characteristic polynomial
    \[
    \rho_k(\zeta)=\zeta^2-2\iu\mu_k\e^{-\iu\alpha}\zeta-\e^{-2\iu\alpha},
    \]
    i.e., 
    \[
    \lambda_k^{\pm}=\left(\iu\mu_k\pm(1-\mu_k^2)^{1/2}\right)\e^{-\iu\alpha}.
    \]
    Condition \eqref{eq:stability} ensures that $|\mu_k|<1$ and thus $|\lambda_k^{\pm}|=1$.
    The vectors $(\lambda_k^+,1)^\top$ and $(\lambda_k^-,1)^\top$ are eigenvectors of $G_k$ with eigenvalue $\lambda_k^+$ and $\lambda_k^-$, respectively. This is because (similar for $\lambda_k^-$)
    \[
    \begin{pmatrix}
        2\iu\mu_k\e^{-\iu\alpha} & \e^{-2\iu\alpha}\\
        1 &0
    \end{pmatrix}\begin{pmatrix}
        \lambda_k^+\\1
    \end{pmatrix}=\begin{pmatrix}
        2\iu\mu_k\e^{-\iu\alpha}\lambda_k^++\e^{-2\iu\alpha}\\ \lambda_k^+
    \end{pmatrix}=\lambda_k^+\begin{pmatrix}
        \lambda_k^+\\1
    \end{pmatrix}.
    \]
    Therefore $G_k$ is diagonalizable,
    \[
    P_k^{-1}G_kP_k=\Lambda_k=\text{diag}\{\lambda_k^+,\lambda_k^-\},
  \]
    and $\Lambda_k$ is a unitary matrix. Using the transformation matrix $P_k$, we have, for any vector $y\in\mathbb{C}^2$,
     \[
    |P_k^{-1}G_ky|_{2}=|\Lambda_k P_k^{-1}y|_{2} 
    =|P_k^{-1}y|_{2}.
    \]
    Therefore,
    \begin{equation}\label{eq:conserved}
    \begin{aligned}
     \vvvert U^{n}\vvvert 
    :=&\sum_k\left|P^{-1}_k
    \begin{pmatrix}
        \hat{u}_k^{n+1}\\\hat{u}_k^{n}
    \end{pmatrix}
    \right|_2
    =\sum_k\left|P^{-1}_kG_k
    \begin{pmatrix}
        \hat{u}_k^{n}\\\hat{u}_k^{n-1}
    \end{pmatrix}
    \right|_2
    \\
    =&\sum_k\left|P^{-1}_k
    \begin{pmatrix}
        \hat{u}_k^{n}\\\hat{u}_k^{n-1}
    \end{pmatrix}
    \right|_2
    =\vvvert U^{n-1}\vvvert.
    \end{aligned}
    \end{equation}
    Finally, we show that 
\[
\|P\|_2:=\max_{k}\|P_k\|_2\leq C_1,\quad \|P^{-1}\|_2:=\max_{k}\|P_k^{-1}\|_2\leq C_2,
\]
which yields that the newly introduced norm $\vvvert\cdot\vvvert$ is equivalent to $\|\cdot\|_{A(\mathbb{T}) \times A(\mathbb{T})}$. 
    Since
    \[
    P_k^*P_k=
    \begin{pmatrix}
    2 &1+\overline{\lambda_k^+}\lambda_k^-\\
    1+\overline{\lambda_k^-}\lambda_k^+ &2
    \end{pmatrix},
    \]
the eigenvalues of $P_k^*P_k$ can be calculated as $2(1\pm\mu_k)$. Since $|\mu_k|\le \theta < 1$ by condition~\eqref{eq:stability}, we have for all $k$ that
\[
\begin{aligned}
\|P_k\|_2&
=\sqrt{\lambda_{\text{max}}(P_k^*P_k)}<2, 
\\
\|P_k^{-1}\|_2&
=1/\sqrt{\lambda_{\text{min}}(P_k^*P_k)}\leq 1/\sqrt{2(1-\theta)}, 
\end{aligned}
\]
so that 
$$\tfrac12 \,\| U \|_{A(\mathbb{T}) \times A(\mathbb{T})} \le \vvvert U \vvvert \le \frac1{\sqrt{2(1-\theta)}}\,\| U \|_{A(\mathbb{T}) \times A(\mathbb{T})}
$$
for all $U\in A(\mathbb{T}) \times A(\mathbb{T})$.
\end{proof}

We similarly extend the weighted Crank--Nicolson algorithm \eqref{eq:CN} to all $x\in\mathbb{T}$ and omit the nonlinearity.

\begin{lemma}[Linear stability of the weighted Crank--Nicolson method]\label{lem:stability-cn}
    The weighted Crank--Nicolson algorithm \eqref{eq:CN} without the nonlinear term is unconditionally stable with
    \[
    \|u^{n+1}\|_{A(\mathbb{T})}=\|u^{n-1}\|_{A(\mathbb{T})}.
    \]
\end{lemma}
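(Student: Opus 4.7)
The plan is to mimic the Fourier analysis already used for the weighted leapfrog method, but exploit the fact that the Crank--Nicolson scheme treats the spatial operator via the average $\tilde u^n = (\e^{\iu\alpha}u^{n+1} + \e^{-\iu\alpha}u^{n-1})/2$. This symmetrization will make the per-mode amplification factor a Cayley-type quotient that is automatically of modulus one for real arguments.

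First I would interpolate the linear Crank--Nicolson recursion from the grid to $x\in\mathbb{T}$, as done in \eqref{eq:scheme-x}, and expand $u^n(x) = \sum_k \e^{\iu k x}\,\hat u^n_k$. Substituting the Fourier series into the spatial stencil, the computation reproduces exactly the multiplier $\gamma_k = \cos(\beta-kh) + \beta\sin(\beta-kh) - 1$ from the proof of Lemma~\ref{th:stability}. Writing $\mu_k = \eps\tau\gamma_k/h^2$, each Fourier mode then satisfies
\begin{equation*}
\iu\eps\,\frac{\e^{\iu\alpha}\hat u^{n+1}_k - \e^{-\iu\alpha}\hat u^{n-1}_k}{2\tau} + \eps^2\,\frac{\gamma_k}{h^2}\cdot\frac{\e^{\iu\alpha}\hat u^{n+1}_k + \e^{-\iu\alpha}\hat u^{n-1}_k}{2} = 0,
\end{equation*}
which after multiplying by $2\tau/(\iu\eps)$ rearranges to
\begin{equation*}
(1-\iu\mu_k)\,\e^{\iu\alpha}\hat u^{n+1}_k = (1+\iu\mu_k)\,\e^{-\iu\alpha}\hat u^{n-1}_k.
\end{equation*}

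The crucial observation is now immediate: since $\mu_k\in\mathbb{R}$, the coefficients $1\pm \iu\mu_k$ are complex conjugates and so the amplification factor
\begin{equation*}
\hat u^{n+1}_k = \e^{-2\iu\alpha}\,\frac{1+\iu\mu_k}{1-\iu\mu_k}\,\hat u^{n-1}_k
\end{equation*}
has modulus exactly one for every $k\in\mathbb{Z}$, with no restriction on $\tau$ or $h$. No CFL-type bound on $|\mu_k|$ is needed, which is the reason the scheme is unconditionally stable. Summing $|\hat u^{n+1}_k| = |\hat u^{n-1}_k|$ over $k$ gives the identity of Wiener-algebra norms $\|u^{n+1}\|_{A(\mathbb{T})} = \|u^{n-1}\|_{A(\mathbb{T})}$.

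There is no genuine obstacle here; the only care needed is to check that the finite-difference stencil of Crank--Nicolson produces exactly the same symbol $\gamma_k$ as in the leapfrog case (which it must, since the spatial operator is identical), and to verify that dividing by $1-\iu\mu_k$ is legitimate, i.e.\ that this factor never vanishes for real $\mu_k$, which is obvious. Compared with the leapfrog analysis, we even avoid the diagonalization of a $2\times 2$ amplification matrix and the introduction of the auxiliary norm $\vvvert\cdot\vvvert$, because the spatial averaging built into the Crank--Nicolson scheme already renders each mode-wise propagator a scalar unitary map.
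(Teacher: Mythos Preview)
Your proposal is correct and follows essentially the same route as the paper: Fourier expansion, identification of the same real symbol $\gamma_k$, and the observation that the resulting per-mode relation $(1-\iu\mu_k)\e^{\iu\alpha}\hat u^{n+1}_k=(1+\iu\mu_k)\e^{-\iu\alpha}\hat u^{n-1}_k$ (equivalently, the paper's $(\iu\eps/2\tau+\eps^2\gamma_k/2h^2)$ form) has unimodular amplification factor because the coefficients are complex conjugates. The only additions you make beyond the paper are the explicit remark that $1-\iu\mu_k\ne 0$ and the comparison with the leapfrog analysis, both of which are helpful but not essential.
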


\begin{proof}  
    Substituting the Fourier series of $u^n$
    into \eqref{eq:CN} without the nonlinear term yields
    \begin{align*}
    \sum_{k}\e^{\iu k x}\Biggl(\iu\eps\frac{\e^{\iu\alpha}\hat{u}^{n+1}_k-\e^{-\iu\alpha}\hat{u}^{n-1}_k}{2\tau}
    +\eps^2\,\frac{\gamma_k}{h^2}\frac{\e^{\iu\alpha}\hat{u}_k^{n+1}+\e^{-\iu\alpha}\hat{u}_k^{n-1}}{2}\Biggr)=0,
    \end{align*}
    again with $\gamma_k=\cos(\beta-kh) + \beta \sin(\beta-kh)-1$, which leads to
    \[
    \left(\frac{\iu\eps}{2\tau}+
        \eps^2\,\frac{\gamma_k}{2h^2}
    \right)\e^{\iu\alpha}\hat{u}_k^{n+1}
    =\left(\frac{\iu\eps}{2\tau}-
    \eps^2\,\frac{\gamma_k}{2h^2}
    \right)\e^{-\iu\alpha}\hat{u}_k^{n-1}.
    \]
    Therefore we have $|\hat{u}_k^{n+1}|=|\hat{u}_k^{n-1}|$ for all $k$, which yields  the result.
\end{proof}

\subsection{Nonlinear stability}\label{subsec:nonlinear}
\begin{lemma}[Nonlinear stability of the weighted leapfrog method] \label{lem:stability}
    Let the function $u\in C([0,T],A(\mathbb{T}))$ be arbitrary and let the corresponding defect $d$ be defined by \eqref{eq:defect}.
    Under condition \eqref{eq:stability}, the interpolated numerical solution of \eqref{eq:scheme}, interpolated to all $x\in\mathbb{T}$ as in \eqref{eq:scheme-x} (but now with the nonlinear term included), satisfies the bound, for $t_n=n\tau\le T$
    \[
    \|u^n-u(t_n,\cdot)\|_{A(\mathbb{T})}\le C \Bigl(\|u^0-u(0,\cdot)\|_{A(\mathbb{T})}+\|u^1-u(t_1,\cdot)\|_{A(\mathbb{T})}
    + \eps^{-1}\| d \|_{C([0,T],A(\mathbb{T}))} \Bigr),
    \]
    where $C$ is independent of $\eps$, $\tau$, $h$, and $n$ with $t_n\le T$, but depends on $T$ and on upper bounds of the above term in big brackets and of the $C([0,T],A(\mathbb{T}))$ norm of~$u$.
\end{lemma}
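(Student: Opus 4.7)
The plan is to treat the full weighted leapfrog scheme for $u^n$ as the linear scheme of Section~\ref{subsec:linear} perturbed by two forcing terms, then to invoke the linear stability of Lemma~\ref{th:stability} together with a discrete Duhamel formula, the Banach-algebra property of $A(\mathbb{T})$, and a standard bootstrap argument. Set $e^n(x) := u^n(x) - u(t_n,x)$, where $u^n$ is extended to all $x\in\mathbb{T}$ exactly as in \eqref{eq:scheme-x} but with the nonlinearity $\eps\lambda|u^n|^2 u^n$ included pointwise. Subtracting the defect identity \eqref{eq:defect} from this interpolated scheme gives the inhomogeneous linear weighted leapfrog recursion
\[
\iu\eps\,\frac{\e^{\iu\alpha}e^{n+1}-\e^{-\iu\alpha}e^{n-1}}{2\tau}+\frac{\eps^2}{2}L_h e^n = g^n,
\]
with $L_h$ the weighted central second-difference operator of \eqref{eq:scheme-x} and forcing $g^n := \eps\lambda\bigl(|u^n|^2 u^n - |u(t_n,\cdot)|^2 u(t_n,\cdot)\bigr) - d(t_n,\cdot)$.

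Passing to Fourier coefficients as in the proof of Lemma~\ref{th:stability}, each mode $k$ satisfies the two-term recursion
\[
\begin{pmatrix}\hat e^{n+1}_k\\ \hat e^n_k\end{pmatrix} = G_k\begin{pmatrix}\hat e^n_k\\ \hat e^{n-1}_k\end{pmatrix} + \frac{2\tau}{\iu\eps}\,\e^{-\iu\alpha}\begin{pmatrix}\hat g^n_k\\ 0\end{pmatrix}
\]
with the amplification matrix $G_k$ of \eqref{eq:matrix}. Iterating, applying $P_k^{-1}$ mode by mode so that $\Lambda_k=P_k^{-1}G_k P_k$ is unitary, and summing the resulting per-mode Euclidean norms over $k$, I obtain a discrete Duhamel formula in the norm $\vvvert\cdot\vvvert$. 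The uniform equivalence of $\vvvert\cdot\vvvert$ with $\|\cdot\|_{A(\mathbb{T})\times A(\mathbb{T})}$ (valid under \eqref{eq:stability}, as established at the end of the proof of Lemma~\ref{th:stability}) then yields
\[
\|e^n\|_{A(\mathbb{T})} \le C\bigl(\|e^0\|_{A(\mathbb{T})}+\|e^1\|_{A(\mathbb{T})}\bigr) + C\,\frac{\tau}{\eps}\sum_{j=1}^{n-1}\|g^j\|_{A(\mathbb{T})}.
\]

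To bound $\|g^j\|_A$ I use the Banach-algebra inequality \eqref{eq:tri}, which gives $\||v|^2 v - |w|^2 w\|_A \le 3\max(\|v\|_A,\|w\|_A)^2\,\|v-w\|_A$, hence
\[
\|g^j\|_A \le 3\eps|\lambda|\,M_j^2\,\|e^j\|_A + \|d(t_j,\cdot)\|_A, \qquad M_j=\max\bigl(\|u^j\|_A,\|u(t_j,\cdot)\|_A\bigr).
\]
Substituting yields
\[
\|e^n\|_A \le C(\|e^0\|_A+\|e^1\|_A) + 3C|\lambda|\,\tau\sum_{j=1}^{n-1} M_j^2\,\|e^j\|_A + \frac{CT}{\eps}\|d\|_{C([0,T],A(\mathbb{T}))},
\]
where the $\eps^{-1}$ on the defect arises naturally from the $T/\tau$ steps each contributing $(\tau/\eps)\|d\|_A$, while the $\eps$ in front of the nonlinearity cancels the $1/\eps$ in $\tau/\eps$.

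The main obstacle is the bootstrap on $M_j$, since $\|u^j\|_A$ is part of the quantity one is trying to control. Under the inductive hypothesis $\|e^j\|_A\le 1$ for $j<n$, one has $M_j\le \wt M := \|u\|_{C([0,T],A(\mathbb{T}))}+1$, and the discrete Gronwall lemma applied to the previous estimate gives
\[
\|e^n\|_A \le e^{3C|\lambda|\wt M^2 T}\bigl(\|e^0\|_A+\|e^1\|_A+\eps^{-1}\|d\|_{C([0,T],A(\mathbb{T}))}\bigr).
\]
Since the lemma permits the final constant to depend on an upper bound of the big-brackets expression and on $\|u\|_{C([0,T],A(\mathbb{T}))}$, one may restrict to data for which this upper bound is small enough to keep the right-hand side below $1$; then the inductive hypothesis is preserved, the bootstrap closes, and the claimed inequality follows.
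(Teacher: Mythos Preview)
Your proof is correct and follows essentially the same route as the paper: derive the error recursion, pass to Fourier modes and use the diagonalizing matrices $P_k$ together with the unitarity of $\Lambda_k$ to obtain a discrete Duhamel-type inequality in the norm $\vvvert\cdot\vvvert$, estimate the cubic difference via the Banach-algebra property \eqref{eq:tri}, and conclude with discrete Gronwall. One minor remark: you make the bootstrap step explicit, whereas the paper absorbs it into the $\lesssim$ constants and the hypothesis that $C$ may depend on an upper bound of the bracket; your phrasing ``restrict to data for which this upper bound is small enough'' is a slightly informal way of saying the same thing, but the argument is the intended one.
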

\begin{proof}
We define the error function $e^n(x)=u^n(x)-u(t_n,x)$, which satisfies
\[
\begin{aligned}
&\e^{\iu\alpha}e^{n+1}(x)-\e^{-\iu\alpha}e^{n-1}(x)\\
=&\,\frac{\iu\eps\tau}{h^2}\left(\e^{-\iu\beta}(1+\iu\beta)e^{n}(x+h)-2e^n(x)+\e^{\iu\beta}(1-\iu\beta)e^{n}(x-h)\right)\\
&-2\iu\lambda\tau \left(|u^n(x)|^2u^n(x)-|u(t_n,x)|^2u(t_n,x)\right)
-2\iu\tau\,\eps^{-1}\,d(t_n,x).
\end{aligned}
\]
The Fourier coefficient of $e^n$ then satisfies 
\[
\begin{aligned}
&\e^{\iu\alpha}\hat{e}_k^{n+1}-\e^{-\iu\alpha}\hat{e}^{n-1}_k\\
=&\,\frac{2\iu\eps\tau}{h^2}\,\frac{\cos(\beta)\cos(kh)+\left(\sin(\beta)-\beta\right)\sin(kh)-1}{h^2}\hat{e}^n_k
\\
&-2\iu\lambda\tau \mathcal{F}\left(|u^n(x)|^2u^n(x)-|u(t_n,x)|^2u(t_n,x)\right)(k)-2\iu\tau\eps^{-1}\hat{d}^n_k.
\end{aligned}
\]
This equation can be written in the one-step form
\begin{align*}
\begin{pmatrix}
\hat{e}^{n+1}_k\\ \hat{e}^{n}_k
\end{pmatrix} =
G_k
\begin{pmatrix}
\hat{e}^{n}_k\\ \hat{e}^{n-1}_k
\end{pmatrix}
&-2\iu\lambda\tau\,\e^{-\iu\alpha}
\begin{pmatrix}
\mathcal{F}\left(|u^n(x)|^2u^n(x)-|u(t_n,x)|^2u(t_n,x)\right)(k)\\ 0
\end{pmatrix}
\\
& -2\iu\tau \eps^{-1}\e^{-\iu\alpha}
\begin{pmatrix}
\hat{d}^n_k\\ 0
\end{pmatrix},
\end{align*}
where $G_k$ is defined in \eqref{eq:matrix}. 
 
We define the error vector as ${\mathcal{E}}^{n}=\begin{pmatrix}        e^{n+1}\\
        e^{n}
     \end{pmatrix}$.
Multiplying the above equation by $P_k^{-1}$ and summing over $k$ gives
\[
\begin{aligned}
 &\vvvert \mathcal{E}^n\vvvert=\,\sum_k\left|P^{-1}_k
    \begin{pmatrix}
        \hat{u}_k^{n+1}\\\hat{u}_k^{n}
    \end{pmatrix}
    \right|_2\\
    \leq&\,\sum_k\left|P^{-1}_kG_k
\begin{pmatrix}
\hat{e}^{n}_k\\ \hat{e}^{n-1}_k
\end{pmatrix}\right|_2+c_0\tau\sum_k\left|P^{-1}_k\begin{pmatrix}
\mathcal{F}\left(|u^n(x)|^2u^n(x)-|u(t_n,x)|^2u(t_n,x)\right)(k)\\ 0
\end{pmatrix}
    \right|_2\\
&+\tilde{c}_0\tau\eps^{-1}\sum_k\left|P^{-1}_k\begin{pmatrix}
\hat{d}^n_k\\ 0
\end{pmatrix}\right|_2.
 \end{aligned}
 \]
 By \eqref{eq:conserved}, the first term on the right-hand side is $\vvvert{\mathcal{E}}^{n-1}\vvvert$. The second term can be estimated as follows (we use $\lesssim$ to denote $\leq C$ for some constant $C$):
\[
\begin{aligned}
&\sum_k\Biggl|P^{-1}_k
\binom{\mathcal{F}\!\left(|u^n(x)|^2u^n(x)-|u(t_n,x)|^2u(t_n,x)\right)(k)}{0}\Biggr|_2\\
\lesssim &\,
\left\|\,|u^n(x)|^2u^n(x)-|u(t_n,x)|^2u(t_n,x)\,\right\|_{A(\mathbb T)}\\
=&\,\left\|
\left(|u^n(x)|^2+|u(t_n,x)|^2\right)\left(u^n(x)-u(t_n,x)\right)
+ u^n(x)\left(\bar{u}^n(x)-\bar{u}(t_n,x)\right)u(t_n,x)
\,\right\|_{A(\mathbb T)}\\
\lesssim &\,
\|u^n-u(t_n,\cdot)\|_{A}
 \;\lesssim\; \vvvert{\mathcal{E}}^{\,n-1}\vvvert,
\end{aligned}
\]
 where we have used the estimate \eqref{eq:tri} for the nonlinear term and the norm equivalence between $\vvvert\cdot\vvvert$ and $\|\cdot\|_{A(\mathbb{T})\times A(\mathbb{T})}$ as stated in Lemma~\ref{th:stability}. We then have
\[
\begin{aligned}
 \vvvert \mathcal{E}^n\vvvert\leq&\,
(1+c\tau)
\vvvert{\mathcal{E}}^{n-1}\vvvert
+\widetilde c\tau \eps^{-1} \|d(t_n,\cdot)\|_{A(\mathbb{T})}\\
\leq& \, (1+c\tau)^n
\vvvert{\mathcal{E}}^{0}\vvvert
+\widetilde c\tau \eps^{-1}\sum_{j=1}^{n}(1+c\tau)^{n-j}\|d(t_j,\cdot)\|_{A(\mathbb{T})}
\\
\leq& \,\e^{cn\tau}\vvvert\mathcal{E}^{0}\vvvert
+\widetilde c\tau \eps^{-1}\frac{\e^{cn\tau}-1}{c\tau}\sup_{t\in[0,T]}\|d(t,\cdot)\|_{A(\mathbb{T})},
\end{aligned}
\]
which yields the result, using the norm equivalence once more.
\end{proof}

An analogous result holds true for the weighted Crank--Nicolson method, with essentially the same proof, now based on Lemma~\ref{lem:stability-cn}.

The proof of Theorem \ref{th:conv-1} is then finished by combining Lemmas~\ref{lem:defect-wiener} and \ref{lem:stability}.

\bigskip\bigskip
\centerline{\bf Part II. Multiphase initial conditions}
\bigskip
For the multiphase problem we apply the weighted finite difference methods multiple times, corresponding to each of the different wave vectors $\kappa_m$ and associated frequencies $\omega_m$. The nonlinearity needs to be treated in a special way.

\section{Two opposite phases}\label{sec:two}
As an illustration of the procedure for multiphase initial data, we consider in this section the particular case of two initial wave packets having opposite wave numbers.

\subsection{Modulated Fourier expansion}
The following result provides an $\mathcal{O}(\eps^2)$ approximation to the solution. It is a special case of Theorem~\ref{th:MFE} given in the next section for the general multiphase case.   

\begin{proposition}[Modulated Fourier expansion for two opposite initial phases]\label{prop:MFE-2}
Let $u(t,x)$ be the solution to \eqref{eq:schr} with initial data given by
\[
u(0,x) = a^0_{1}(x)\, \mathrm{e}^{\iu \kappa x / \eps} + a^0_{-1}(x)\, \mathrm{e}^{-\iu \kappa x / \eps}
\]
with $\kappa\ne 0$ and with smooth functions $a^0_{\pm1}$.  Then, $u(t,x)$ admits a modulated Fourier expansion
\[
u(t,x) = u_{\text{\rm MFE}}(t,x) + e(t,x),
\]
where, with $\kappa_{\pm 1}= \pm \kappa$ and $\omega_{\pm 1}=\tfrac12\kappa^2$, 
and with $\kappa_{\pm3}=\pm 3\kappa$ and $\omega^\star_{\pm3}=\tfrac12(\pm 3\kappa)^2$,  
$\omega_{\pm3}=\tfrac12\kappa^2\neq\omega^\star_{\pm3}$ 
and with smooth modulation functions $a_{\pm1}$, $b^\star_{\pm3}$ and $b_{\pm3}$ defined below,
\[
\begin{aligned}
u_{\text{\rm MFE}}(t,x)=&\sum_{r =\pm1} a_r(t,x)\, \mathrm{e}^{\iu (\kappa_r x - \omega_r t)/\eps}\\
+&\eps\lambda \sum_{\nu=\pm3} b^\star_\nu(t,x)\, \mathrm{e}^{\iu (\kappa_\nu x - \omega_\nu^\star t)/\eps}+\eps\lambda\sum_{\nu=\pm3}b_\nu(t,x)\, \mathrm{e}^{\iu (\kappa_\nu x - \omega_\nu t)/\eps}.
\end{aligned}
\] 
The function $a_{1}(t,x)$ is the solution of an advected nonlinear Schr\"odinger equation with smooth initial data,
\begin{equation}\label{eq:a1}
\begin{aligned}
&\partial_t a_{1} + \kappa_1 \partial_x a_{1} - \frac{\iu\eps}{2} \partial_x^2 a_{1} = 
-\iu\lambda\left(\left(|a_{1}|^2 + 2|a_{-1}|^2\right)a_{1}+\eps\lambda(2 a_{-1}\bar{a}_{1}b_{3}+ a_{-1}\bar{b}_{-3}a_{-1})\right), 
\\
&a_{1}(0,x)=a^0_{1}(x),
\end{aligned}
\end{equation}
and $a_{-1}(t,x)$ satisfies the same equation where all subscripts have reversed signs. 

The function $b_{3}(t,x)$ (and analogously $b_{-3}$ with opposite subscripts) is given by the formula, with $\delta_3=\omega_3 - \omega_3^\star=-4\kappa^2\ne 0$,
\[
b_{3}(t,x) = \frac{1}{\delta_3}( a_{1} \bar{a}_{-1} a_{1})(t,x),
\]
which turns \eqref{eq:a1} into a quintic advected Schr\"odinger equation.

The function $b^\star_{3}(t,x)$ (and analogously $b_{-3}^\star$ with opposite subscripts)
solves the advected linear Schr\"odinger equation
$$
\partial_t b^\star_{3} +\kappa_3\partial_xb^\star_{3}
- \frac{\iu\eps}{2} \partial_x^2 b_{3}^\star = -2\iu\lambda (|a_{1}|^2+|a_{-1}|^2)b_{3}^\star, \quad\ b^\star_{3}(0,x)=-b_{3}(0,x).
$$
With these coefficient functions $a_{\pm1}$, $b^\star_{\pm3}$ and $b_{\pm3}$, the error $e(t,x)$ is bounded in the maximum norm by
\[
\|e\|_{C([0,T]\times\mathbb{T)}}\leq C\eps^2.
\] 
\end{proposition}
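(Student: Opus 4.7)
The plan is to construct $u_{\mathrm{MFE}}$ via a two-scale ansatz whose modulation functions are determined order by order in $\eps$, and then to control the remainder $e = u - u_{\mathrm{MFE}}$ by a Duhamel argument combined with an oscillation-averaging step for the non-resonant components of the defect.

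First I view the six-component ansatz as a sum of mode contributions $A_\nu(t,x)\,\e^{\iu(\kappa_\nu x - \omega_\nu t)/\eps}$ with $\nu\in\{\pm 1,\pm 3,\pm 3^\star\}$, where $A_{\pm 1}=a_{\pm 1}$, $A_{\pm 3}=\eps\lambda\, b_{\pm 3}$ and $A_{\pm 3^\star}=\eps\lambda\, b^\star_{\pm 3}$. The wavenumbers $\pm 3\kappa$ are forced because the cubic nonlinearity $|u|^2 u$ combines three factors of $\e^{\pm\iu\kappa x/\eps}$ into $\e^{\pm 3\iu\kappa x/\eps}$. At each new wavenumber there is both a ``driven'' time frequency $\omega_{\pm 3}=\kappa^2/2$ inherited from triple products of the main modes and a ``free'' time frequency $\omega^\star_{\pm 3}=9\kappa^2/2$ consistent with the Schr\"odinger dispersion relation, which accounts for the splitting $b_{\pm 3}$ vs.\ $b^\star_{\pm 3}$.

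Next I substitute the ansatz into \eqref{eq:schr}, expand $|u_{\mathrm{MFE}}|^2 u_{\mathrm{MFE}}$, and collect coefficients of each phase $\e^{\iu(\kappa_\nu x - \omega_\nu t)/\eps}$ using
\[
\Bigl(\iu\eps\partial_t+\tfrac{\eps^2}{2}\partial_x^2\Bigr)\bigl[A\,\e^{\iu(\kappa_\nu x-\omega_\nu t)/\eps}\bigr]
= \Bigl[\bigl(\omega_\nu-\tfrac12\kappa_\nu^2\bigr)A
+\iu\eps(\partial_t+\kappa_\nu\partial_x)A
+\tfrac{\eps^2}{2}\partial_x^2 A\Bigr]\e^{\iu(\kappa_\nu x-\omega_\nu t)/\eps}.
\]
At $\nu=1$ the prefactor $\omega_1-\tfrac12\kappa_1^2$ vanishes, and matching the $\mathcal{O}(\eps)$ contribution $\eps\lambda(|a_1|^2+2|a_{-1}|^2)a_1$ together with the $\mathcal{O}(\eps^2)$ corrections from one $A_{\pm 3}$-factor combined with two main-mode factors yields \eqref{eq:a1}. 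At $\nu=3$ the prefactor equals $\delta_3=-4\kappa^2\ne 0$, so the leading balance $\delta_3\,\eps\lambda\,b_3=\eps\lambda\,a_1^2\bar a_{-1}$ forces the algebraic formula $b_3=(a_1^2\bar a_{-1})/\delta_3$. At $\nu=3^\star$ the prefactor vanishes and the leading nonlinear contribution is of order $\eps^2\lambda^2$, coming from products $A_{3^\star}\cdot a_{\pm 1}\cdot\overline{a_{\pm 1}}$, which yields the linear advected Schr\"odinger equation for $b^\star_3$. The equations for $a_{-1}$, $b_{-3}$, $b^\star_{-3}$ follow by the reflection $\kappa\leftrightarrow-\kappa$. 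Evaluating at $t=0$, all phase factors reduce to $\e^{\iu m\kappa x/\eps}$, so matching the initial data forces $a_{\pm 1}(0,\cdot)=a^0_{\pm 1}$ and $b^\star_{\pm 3}(0,\cdot)=-b_{\pm 3}(0,\cdot)$. The resulting closed system --- a coupled quintic advected NLS for $(a_1,a_{-1})$ after inserting the algebraic formulas for $b_{\pm 3}$, together with linear advected Schr\"odinger equations for $b^\star_{\pm 3}$ --- is smoothly solvable on $[0,T]$ by standard theory in the Wiener algebra.

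Finally I estimate the error. By construction the defect
\[
d := \iu\eps\partial_t u_{\mathrm{MFE}}+\tfrac{\eps^2}{2}\partial_x^2 u_{\mathrm{MFE}}-\eps\lambda|u_{\mathrm{MFE}}|^2 u_{\mathrm{MFE}}
\]
has components of amplitude $\mathcal{O}(\eps^3)$ at the phases $\nu=\pm 1$ and $\nu=\pm 3^\star$, while at $\nu=\pm 3$ its amplitude is only $\mathcal{O}(\eps^2)$ but carries the non-resonant temporal oscillation $\e^{-\iu\kappa^2 t/(2\eps)}$. The error satisfies
\[
\iu\eps\partial_t e+\tfrac{\eps^2}{2}\partial_x^2 e = \eps\lambda\bigl(|u|^2 u - |u_{\mathrm{MFE}}|^2 u_{\mathrm{MFE}}\bigr) + d,\qquad e(0,\cdot)=0,
\]
and, using that the free Schr\"odinger group is isometric on $A(\mathbb T)$ and that \eqref{eq:tri} controls the cubic difference, a Duhamel--Gr\"onwall estimate gives $\|e(t,\cdot)\|_{A(\mathbb T)}\lesssim \eps^{-1}\int_0^t\|d(s,\cdot)\|_{A(\mathbb T)}\,ds$. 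The naive pointwise bound on $d$ would yield only $\mathcal{O}(\eps)$; the extra factor of $\eps$ needed for the proposition is gained by integration by parts in $s$ against the non-resonant oscillation in the $\nu=\pm 3$ components of $d$ (equivalently, a one-step normal-form transformation absorbing $d_{\pm 3}$ into a small correction of $u_{\mathrm{MFE}}$). Formalizing this oscillation-averaging step while retaining Gr\"onwall control of the cubic nonlinearity in $A(\mathbb T)$ is the main technical obstacle; everything else is systematic bookkeeping of phases generated by the cubic nonlinearity.
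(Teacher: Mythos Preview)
Your proposal is correct and follows essentially the same route as the paper: substitute the ansatz, match phases to derive the modulation equations, identify the defect as a sum of $\mathcal{O}(\eps^2)$ terms carrying nonresonant phases plus an $\mathcal{O}(\eps^3)$ remainder, and close via Duhamel in the Wiener algebra with an integration-by-parts (partial summation) step that gains one power of $\eps$ on the nonresonant part before applying Gronwall. The paper phrases the latter step by invoking Lemma~5.7 of Carles--Dumas--Sparber, which is exactly your ``oscillation-averaging''/normal-form observation. One minor point: the $\mathcal{O}(\eps^2)$ nonresonant part of the defect contains more phases than just $\nu=\pm 3$ (cross terms such as $b_3\bar a_{-1}a_1$ or $b^\star_3\bar a_{-1}a_1$ generate $\pm 5\kappa$ and other nonresonant combinations), but you already acknowledge this bookkeeping and it is handled identically.
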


This result is basic for constructing a numerical method with an $O(\tau^2+h^2+\eps^2)$ error bound without restrictions on the ratios $\tau/\eps$ and $h/\eps$.

\subsection{Weighted finite difference methods} \label{subsec:wfdm}
We extend the weighted leapfrog and Crank--Nicolson methods of Part I to the case of two initial phases. To simplify the notation, we define the weighted leapfrog finite difference operator for $u$, which depends on the parameters $\alpha=\tfrac12\kappa^2\tau/\eps$ and $\beta=\kappa h/\eps$:
\begin{equation}\label{DLF}
\mathcal{D}_{\text{LF}}^{\alpha,\beta} u\big|^n_j :=
\iu\eps\,\frac{\e^{\iu\alpha}u_j^{n+1}-\e^{-\iu\alpha}u_j^{n-1}}{2\tau}
+\frac{\eps^2}{2}\, \frac{\e^{-\iu\beta}(1+\iu\beta)u^n_{j+1}-2u^n_j+\e^{\iu\beta}(1-\iu\beta)u^n_{j-1}}{h^2}.
\end{equation}
We consider four schemes that are natural extensions of one another and guide the derivation of the final scheme. In the first cases we approximate the solution by
$$
u(t_n,x_j)\approx u^n_j = \sum_{r =\pm1} u_{[r],j}^n
$$
where $u_{[r],j}^n$ is to approximate $u_r(t,x)=a_r(t,x)\, \mathrm{e}^{\iu (\kappa_r x - \omega_r t)/\eps}$.
In an $\eps$-asymptotically more accurate scheme, we then make a refined approximation
$$
u(t_n,x_j)\approx u^n_j = \sum_{r =\pm1} u_{[r],j}^n
+\sum_{\nu=\pm3} w_{[\nu],j}^{\star,n}+\sum_{\nu =\pm3} w_{[\nu],j}^n,
$$
where
$w_{[\nu],j}^{\star,n}$ is to approximate $w^\star_\nu(t,x)=\eps\lambda\, b^\star_\nu(t,x)\, \mathrm{e}^{\iu (\kappa_\nu x - \omega^\star_\nu t)/\eps}$ and $w_{[\nu],j}^{n}$ is to approximate $w_\nu(t,x)=\eps\lambda\, b_\nu(t,x)\, \mathrm{e}^{\iu (\kappa_\nu x - \omega_\nu t)/\eps}$  at $t=t_n$ and $x=x_j$.

\medskip
\noindent{\bf Case 0 (Naive coupling).}  
We directly extend the single-mode scheme to two modes with fully coupled nonlinearity
\[
\mathcal{D}_{\mathrm{LF}}^{\alpha,\beta}u_{[1]}\big|^n_j = \eps\lambda\big|u_{[1],j}^n + u_{[-1],j}^n\big|^2\,u_{[1],j}^n,
\]
and apply the same scheme to $u_{[-1]}$ with all subscripts of opposite signs  and $\beta$ replaced by $-\beta$. While straightforward, this leads to incorrect numerical results except for very small $\tau\ll\eps$ and $h\ll\eps$.

\medskip
\noindent{\bf Case 1 (Separated phases).}  
To better match the system for $a_{\pm1}$, we separate the nonlinear interactions as
\[
\mathcal{D}_{\mathrm{LF}}^{\alpha,\beta}u_{[1]}\big|^n_j = \eps\lambda\big(|u_{[1],j}^n|^2 + 2|u_{[-1],j}^n|^2 + u_{[1],j}^n\bar{u}_{[-1],j}^n \big)u_{[1],j}^n,
\]
and apply the same scheme to $u_{[-1]}$ with all subscripts of opposite signs and $\beta$ replaced by $-\beta$.  The mixed term  $ u_{[1],j}^n \bar{u}_{[-1],j}^n u_{[1],j}^n$ is oscillatory with wave number $3\kappa$ and nonresonant frequency $\omega=\tfrac 12 \kappa^2\ne \tfrac12(3\kappa)^2$. It contributes an  $\mathcal{O}(\tau)$ error if $\tau>\eps$ and $\alpha=\omega\tau/\eps$ is bounded away from multiples of $2\pi$. This can be traced back to partial summation and the geometric sum formula
\[
\tau\sum_{k=0}^{n}\e^{-\iu k \alpha}=\tau\,\frac{1-\e^{-\iu (n+1)\alpha}}{1-\e^{-\iu\alpha}}.
\]
The error is then bounded by the minimum of $\mathcal{O}(\tau + h^2 + \eps)$ and $\mathcal{O}((\tau^2 + h^2)/\eps^3)$, where the latter term results from a standard error analysis using Taylor expansion of the solution $u$ (of interest when $\eps$ is not very small and $h\le\eps^2$).

\medskip
\noindent{\bf Case 2 ($\eps$-asymptotically first order accurate).}  
To improve the order of accuracy in the time step $\tau$ for small $\eps$, we remove the high-frequency oscillations and set
\[
 \mathcal{D}_{\mathrm{LF}}^{\alpha,\beta}u_{[1]}\big|^n_j = \eps\lambda\big(|u_{[1],j}^n|^2 + 2|u_{[-1],j}^n|^2\big)u_{[1],j}^n.\\[1mm]
\]
The formula for $u_{[-1]}$ is obtained by reversing the signs of all subscripts and replacing $\beta$ by $-\beta$. 
This scheme achieves accuracy of order $\mathcal{O}(\tau^2 + h^2 + \eps)$ when $\eps$ is small, but deteriorates as $\eps$ increases.

\medskip
\noindent{\bf Case 3 ($\eps$-asymptotically second order accurate).}  
To improve the accuracy for small $\eps$, we add the $\mathcal{O}(\eps)$ terms and assign the high-frequency oscillations to higher modes:
\[
\begin{aligned}
\mathcal{D}_{\mathrm{LF}}^{\alpha,\beta}u_{[1]}\big|^n_j &= \eps\lambda\big((|u_{[1],j}^n|^2 + 2|u_{[-1],j}^n|^2)u_{[1],j}^n\\
&\quad\qquad+2u_{[-1],j}^n\bar{u}_{[1],j}^n w_{[3],j}^n+u_{[-1],j}^n\bar{w}_{[-3],j}^nu_{[-1],j}^n\bigr)\\[2mm]
&\text{with }\ \,w^{n+1}_{[3],j} = \frac{\eps\lambda}{\delta_3}\, u_{[1],j}^{n+1} \bar{u}_{[-1],j}^{n+1} 
 u_{[1],j}^{n+1},
\\[2mm]
\mathcal{D}_{\mathrm{LF}}^{9\alpha,3\beta}w^\star_{[3]}\big|^n_j &=2\eps\lambda(|u_{[1],j}^n|^2+|u_{[-1],j}^n|^2)w_{[3],j}^{\star,n}.
\end{aligned}
\]
The formulas for the components with negative subscripts are obtained by reversing the signs of all subscripts and replacing  $\beta$ by $-\beta$. This scheme achieves accuracy of order $\mathcal{O}(\tau^2 + h^2 + \eps^2)$.

\medskip
\noindent{\bf Extended leapfrog algorithm.} To ensure uniform accuracy independently of $\eps$, we filter the oscillatory terms using a switching function:
\begin{equation}\label{eq:scheme1-two}
\begin{aligned}
\mathcal{D}_{\mathrm{LF}}^{\alpha,\beta}u_{[1]}\big|^n_j &= \eps\lambda\big((|u_{[1],j}^n|^2 + 2|u_{[-1],j}^n|^2+ \chi\,u_{[1],j}^n\bar{u}_{[-1],j}^n )u_{[1],j}^n\\
&\quad\qquad+2u_{[-1],j}^n\bar{u}_{[1],j}^n w_{[3],j}^n+u_{[-1],j}^n\bar{w}_{[-3],j}^nu_{[-1],j}^n\bigr),\\[1mm]
\mathcal{D}_{\mathrm{LF}}^{9\alpha,3\beta}w^\star_{[3]}\big|^n_j &=2\eps\lambda(|u_{[1],j}^n|^2+|u_{[-1],j}^n|^2)w_{[3],j}^{\star,n},\\[1mm]
\delta_3\,w^{n+1}_{[3],j} &= (1-\chi)\eps\lambda u_{[1],j}^{n+1} \bar{u}_{[-1],j}^{n+1} u_{[1],j}^{n+1},
\end{aligned}
\end{equation}
where $\chi = 1$ if ${h^2\leq c\,\eps^5}$ and zero otherwise. The initial conditions are given by
\[
\begin{aligned}
    u_{[1]}(0,x)&=a^0_{1}(x)\, \mathrm{e}^{\iu \kappa x / \eps},\\
    w_{[3]}(0,x)&=(1-\chi)\frac{\eps\lambda}{\delta_3}u_{[1]}\bar{u}_{[-1]}u_{[1]}(0,x)\\
    w^\star_{[3]}(0,x)&=-w_{[3]}(0,x).
\end{aligned}
\]
The formulas for the components with negative subscripts are obtained by reversing the signs of all subscripts and replacing $\beta$ by $-\beta$.

\medskip
\noindent{\bf Extended Crank--Nicolson algorithm.}  
We define the weighted finite difference operator
\begin{equation} \label{DCN}
\mathcal{D}_{\mathrm{CN}}^{\alpha,\beta}u\big|^n_j :=
\iu\eps\,\frac{\e^{\iu\alpha}u_j^{n+1} - \e^{-\iu\alpha}u_j^{n-1}}{2\tau}
+ \frac{\eps^2}{2}\, \frac{\e^{-\iu\beta}(1+\iu\beta)\tilde{u}^n_{j+1} - 2\tilde{u}^n_j + \e^{\iu\beta}(1-\iu\beta)\tilde{u}^n_{j-1}}{h^2}, 
\end{equation}
where $\tilde{u}_j^n = (\e^{\iu\alpha}u_j^{n+1} + \e^{-\iu\alpha}u_j^{n-1})/{2}$. Following the same strategy, we construct a Crank--Nicolson-type discretization
\begin{equation}\label{eq:scheme2-two}
\begin{aligned}
\mathcal{D}_{\mathrm{CN}}^{\alpha,\beta}u_{[1]}\big|^n_j &= \eps\lambda\Bigl(\bigl((|u_{[1],j}^{n+1}|^2 + |u_{[-1],j}^{n-1}|^2)/2 + |u_{[-1],j}^{n+1}|^2 + |u_{[-1],j}^{n-1}|^2
\\[1mm]
&\qquad \quad + \chi\, {\tilde{u}}_{[1],j}^n \bar{\tilde{u}}_{[-1],j}^n\bigr)\tilde{u}_{[1],j}^n  \\
&\qquad \quad +2\tilde{u}_{[-1],j}^n\bar{\tilde{u}}_{[1],j}^n \tilde{w}_{[3],j}^n+\tilde{u}_{[-1],j}^n\bar{\tilde{w}}_{[-3],j}^n\tilde{u}_{[-1],j}^n\Bigr) 
\\[2mm]
\text{with }\ 
w^{n+1}_{[3],j} &= (1-\chi)\,\frac{\eps\lambda}{\delta_3}\, u_{[1],j}^{n+1} \bar{u}_{[-1],j}^{n+1} u_{[1],j}^{n+1},
\\[2mm]
\mathcal{D}_{\mathrm{CN}}^{9\alpha,3\beta}w^\star_{[3]}\big|^n_j &=\eps\lambda(|u_{[1],j}^{n-1}|^2+|u_{[1],j}^{n+1}|^2+|u_{[1],j}^{n-1}|^2+|u_{[-1],j}^{n+1}|^2)\tilde{w}_{[3],j}^{\star,n}.
\end{aligned}
\end{equation}
The initial conditions are the same as those specified for the extended leapfrog scheme.

The following result is a special case of Theorem~\ref{th:conv-m} given below for the general multiphase case.

\begin{proposition}[$\eps$-uniform convergence in the maximum norm]
Under the assumptions of Proposition~\ref{prop:MFE-2}, and in the case of the leapfrog scheme subject to the stability condition $\eps\tau< \min(h^2/\gamma(\beta),h^2/\gamma(3\beta))$ 
with $\gamma$  defined by \eqref{eq:stability}, the following error estimate holds for both methods \eqref{eq:scheme1-two} and \eqref{eq:scheme2-two}:
\[
|u(t_n, x_j) - u^n_j| \leq \min\left(C_0(\tau^2 + h^2 + \eps^2),\; C_1\frac{\tau^2 + h^2}{\eps^3}\right) \leq C(\tau^{4/5} + h^{4/5}),
\]
uniformly for $t_n = n\tau \leq T$ and $x_j = jh$. The constants $C_0,C_1$ and $C$ are independent of $\eps\in(0,1]$, the time step $\tau$, and the mesh size $h$, and independent of $j$ and $n$ for $t_n \leq T$.
\end{proposition}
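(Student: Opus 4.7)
The strategy is to establish the two bounds inside the minimum separately and then interpolate. For the MFE-based bound $C_0(\tau^2+h^2+\eps^2)$, I would combine the modulated Fourier expansion of Proposition~\ref{prop:MFE-2} with the single-phase convergence machinery of Sections~\ref{sec:consistency}--\ref{sec:stability} applied mode by mode. For the Taylor-based bound $C_1(\tau^2+h^2)/\eps^3$, I would insert the oscillatory exact solution $u$ directly into the weighted schemes and estimate the defect by Taylor expansion, picking up a factor $\eps^{-k}$ per derivative. Writing $S=\tau^2+h^2$, the two branches balance at $\eps^5\sim S$, where both equal a constant times $S^{2/5}$, so $\min(C_0(S+\eps^2),C_1 S/\eps^3)\le C S^{2/5}\le C(\tau^{4/5}+h^{4/5})$.

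For the MFE-based bound, Proposition~\ref{prop:MFE-2} first reduces the estimate to bounding $u^n_j-u_{\text{MFE}}(t_n,x_j)$, at the cost of an additive $O(\eps^2)$ term. The schemes \eqref{eq:scheme1-two} and \eqref{eq:scheme2-two} are built so that each of the six mode components $u_{[\pm 1]},\,w^\star_{[\pm 3]},\,w_{[\pm 3]}$ is propagated by a weighted difference operator tuned to its wave vector and frequency, with each operator consistent either with the corresponding modulation PDE (for $u_{[\pm 1]}$ and $w^\star_{[\pm 3]}$) or with the algebraic closure $\delta_3 w_{[3]}=\eps\lambda\,u_{[1]}\bar u_{[-1]}u_{[1]}$ (for $w_{[\pm 3]}$ when $\chi=0$). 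Inserting the matching MFE modulation functions componentwise, the arguments of Section~\ref{subsec:prep} yield per-component defects of order $\eps(\tau^2+h^2)$ in the Wiener algebra norm, exactly as in Lemma~\ref{lem:defect-wiener}; an extra $O(\eps^2)$ contribution comes from the non-resonant cross-terms generated by substituting the $O(\eps)$ components of $u_{\text{MFE}}$ (the $w^\star$ and $w$ pieces) into the cubic nonlinearity, which the scheme deliberately omits. The linear Fourier stability of Section~\ref{subsec:linear} then extends blockwise to the six-component system, stability for both $\beta$ and $3\beta$ in the leapfrog case accounting for the hypothesis $\eps\tau<\min(h^2/\gamma(\beta),h^2/\gamma(3\beta))$, and for Crank--Nicolson unconditionally via Lemma~\ref{lem:stability-cn}. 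The nonlinear Gronwall argument of Lemma~\ref{lem:stability} carries through using the Banach algebra inequality \eqref{eq:tri} to control the coupled cubic and quintic interactions between modes, producing $C_0(\tau^2+h^2+\eps^2)$.

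For the Taylor-based bound I would Taylor-expand the oscillatory $u$ directly in the weighted schemes; since $\partial_t^k u$ and $\partial_x^k u$ scale like $\eps^{-k}$, the local truncation produces a defect of order $\eps\cdot\tau^2\eps^{-3}+\eps^2\cdot h^2\eps^{-4}=(\tau^2+h^2)/\eps^2$ in the Wiener algebra norm, and the same blockwise nonlinear stability argument (whose defect-to-error scaling picks up one more factor $\eps^{-1}$, cf.\ Lemma~\ref{lem:stability}) yields $C_1(\tau^2+h^2)/\eps^3$. Taking the minimum of the two bounds and interpolating at the balance point $\eps^5\sim S$ gives the uniform $O(\tau^{4/5}+h^{4/5})$ estimate. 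I expect the main obstacle to be the coupled nonlinear defect analysis in the MFE step: one must verify that the specific form of the scheme's nonlinearity---in particular the algebraic closure $\delta_3 w_{[3]}=\eps\lambda u_{[1]}\bar u_{[-1]}u_{[1]}$ together with the cross-coupling terms $2u_{[-1]}\bar u_{[1]}w_{[3]}$ and $u_{[-1]}\bar w_{[-3]}u_{[-1]}$---reproduces the right-hand side of \eqref{eq:a1} componentwise when evaluated on the MFE ansatz, with the remaining non-resonant $w^\star$ cross terms absorbed into the $O(\eps^2)$ remainder. The switching function $\chi$ then merely selects, in each regime of $(\tau,h,\eps)$, whichever branch of the minimum is operative.
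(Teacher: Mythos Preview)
Your approach is essentially the paper's: the proposition is stated there as the two-phase special case of Theorem~\ref{th:conv-m}, whose proof splits on the value of the switching function $\chi$, applies the component-wise single-phase analysis of Theorem~\ref{th:conv-1} together with the MFE remainder bound (here Proposition~\ref{prop:MFE-2}) when $\chi=0$, and a direct Taylor expansion when $\chi=1$, then combines.

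Two minor clarifications. First, the $O(\eps^2)$ in the first branch comes entirely from the MFE remainder of Proposition~\ref{prop:MFE-2}, not from omitted cross-terms in the scheme's defect: when $\chi=0$ the scheme \eqref{eq:scheme1-two} is by construction an exact second-order discretization of the MFE modulation system \eqref{eq:a1} (the algebraic closure for $w_{[3]}$ and the coupling terms reproduce the right-hand side of \eqref{eq:a1} exactly upon substitution), so the paper obtains $|u^n_j-u_{\mathrm{MFE}}(t_n,x_j)|=O(\tau^2+h^2)$ with no additional $\eps^2$ there. Second, the role of $\chi$ is structural rather than merely selective: the Taylor branch genuinely needs $\chi=1$, since only then does the $\chi\,u_{[1]}\bar u_{[-1]}$ term restore the full cubic nonlinearity while $w_{[\pm3]}$ and $w^\star_{[\pm3]}$ vanish identically, whereas the MFE branch needs $\chi=0$ because the high-frequency $\chi$-term would otherwise spoil the $O(\eps(\tau^2+h^2))$ defect bound. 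The two analyses are thus not simultaneously valid for a single scheme; rather, each applies to its own $\chi$-regime, and the crossover $h^2\sim\eps^5$ is precisely where the two error expressions balance.
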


\subsection{Numerical experiments.}
In this numerical test, we consider the one-dimensional semiclassical nonlinear Schr\"odinger equation
\[
\iu\eps\partial_t u+\tfrac12{\eps^2}\partial_{xx}u= \eps |u|^2u
\]
with the initial condition
\[
 u(0,x)=\tfrac{1}{2}\e^{-x^2}\e^{\iu x/\eps}+\tfrac{1}{2}\e^{-x^2}\e^{-\iu x/\eps}.
\]
The spatial domain and final time $T$ are the same as in the previous experiments. The solution error is again evaluated using the discrete $L^\infty$ norm. The switching function is chosen as $\chi = 1$ if ${h^2\leq 5\,\eps^5}$ and zero otherwise.

\begin{figure}[h]
\centerline{
\includegraphics[scale=0.5]{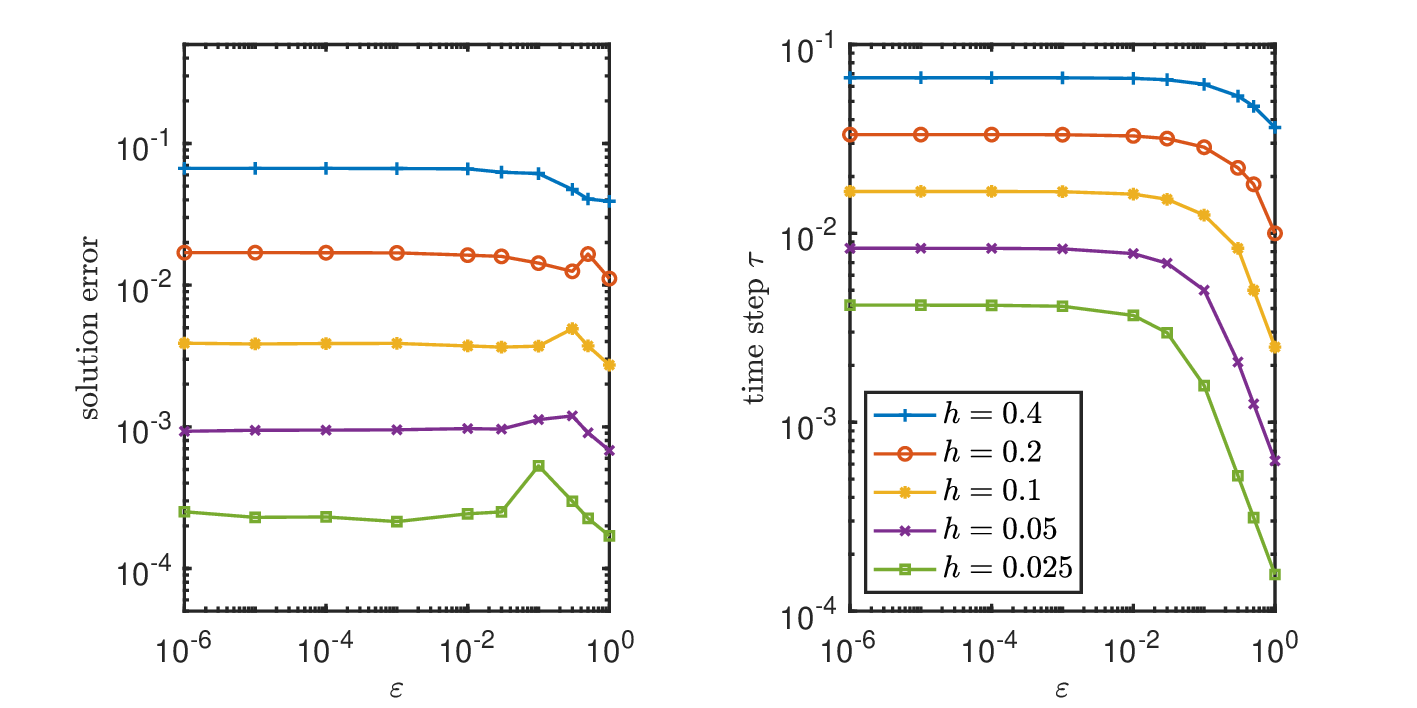}}
\centerline{
\includegraphics[scale=0.5]{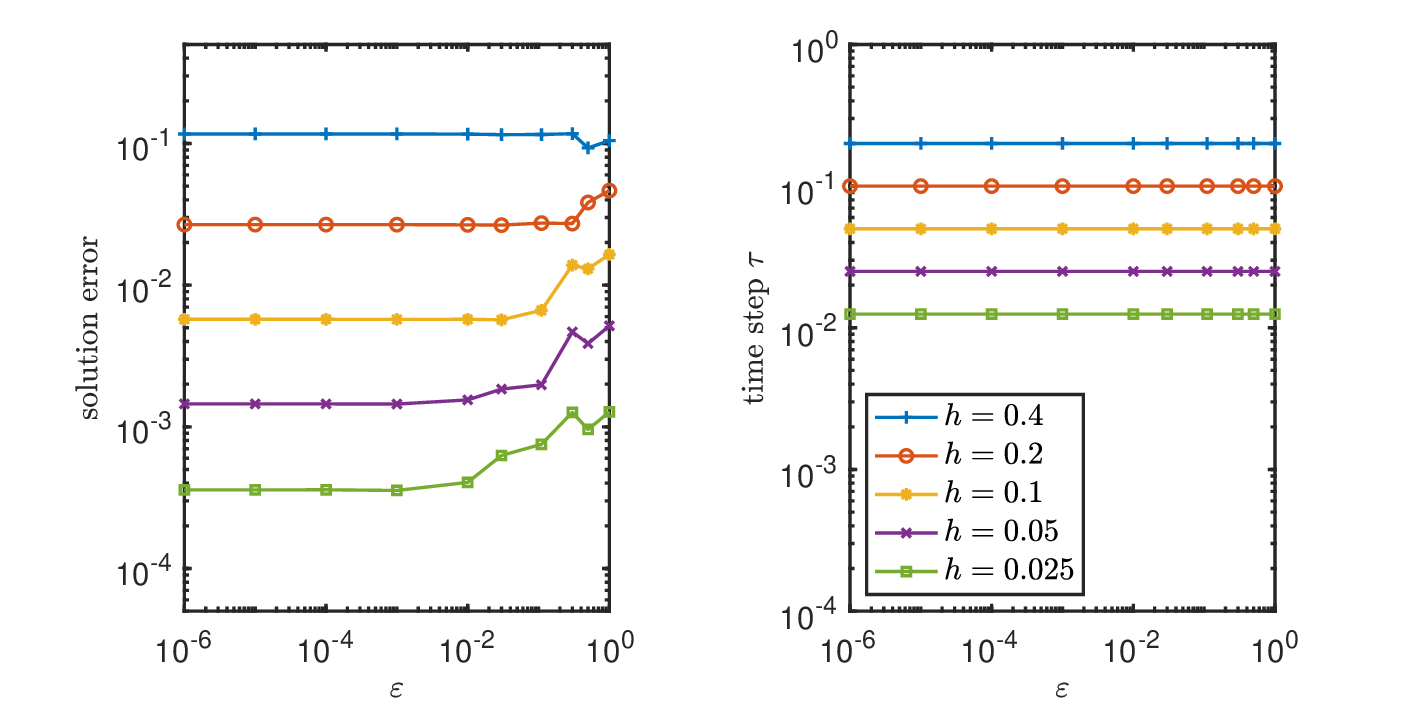}
}
\caption{Error and time stepsize vs. $\eps$ with different $h$ for the weighted leapfrog method (top row) and weighted Crank--Nicolson method (bottom row).}
\label{fig:two}
\end{figure}

Figure \ref{fig:two} displays the absolute error in $u$ plotted against $\eps$ for several fixed values of $h$, using the weighted leapfrog (top row) and the weighted Crank--Nicolson method (bottom row). We chose $\tau=\min(h/2,h^2/(2\eps\gamma_3))$ with $\gamma_3=\gamma(3\beta)$ for the weighted leapfrog method while $\tau=h/2$ for the weighted Crank--Nicolson method. The corresponding time step $\tau$ as a function of $\eps$ is shown in the right panel. It is observed that the error levels off at a constant value proportional to $h^2$ for 
small $\eps$. 

\section{General multiphase problem: modulated Fourier expansion of the exact solution}\label{sec:multi}
We now consider the semiclassical cubic Schr\"odinger equation \eqref{eq:schr} with multiphase initial data given by~\eqref{eq:init} with smooth profile functions $a_m^0:\mathbb{T}^d\to\mathbb{C}$ for $m=1,\dots,M$, and with pairwise different wave vectors $\kappa_m\in\mathbb{R}^d$ 
and with associated frequencies $\omega_m=\tfrac12 |\kappa_m|^2$, where $|\cdot|$ is the Euclidean norm.

For a multi-index $\mu=(m_0,\dots,m_{2l}) \in \{1,\dots,M\}^{2l+1}$ with  $l\ge 0$ we denote
$$
\kappa_\mu= \sum_{i=0}^{2l} (-1)^i \kappa_{m_i} 
\quad \text{ and } \quad
\omega_\mu= \sum_{i=0}^{2l} (-1)^i \omega_{m_i} .
$$
We call the multi-index $\mu$ {\it resonant} if $\omega_\mu=\tfrac12 |\kappa_\mu|^2$.

We construct a sequence of wave vectors that correspond to resonant multi-indices.
Given a set $K=\{\kappa_1,\dots,\kappa_R\}$ (with $R\ge M$) of wave vectors with associated frequences $\omega_r=\tfrac12|\kappa_r|^2$ for $r=1,\dots,R$, we augment this set as follows:
Let $\mu=(i,j,k)$ be a multi-index in $\{1,\dots,R\}^3$, and let $p,q\in \{1,\dots,R\}$. 

\:(i) If $\mu$ is resonant and $\kappa_\mu\notin K$, then we add $\kappa_\mu$ to $K$.

(ii) If $\mu$ is nonresonant and $(\mu,p,q)$ is resonant and $\kappa_{(\mu,p,q)}\notin K$, then we add $\kappa_{(\mu,p,q)}$ to $K$. Similarly, if $\mu$ is nonresonant and $(p,\mu,q)$ is resonant and $\kappa_{(p,\mu,q)}\notin K$, then we add $\kappa_{(p,\mu,q)}$ to $K$.

By these two rules, we augment $K$ to a set $\wh K$ with $\wh R \ge R$ elements. By construction, $K\subseteq\wh K$. We enumerate the elements of $\wh K$,
$$
\wh K = \{\kappa_1,...\kappa_R,\kappa_{R+1},\dots, \kappa_{\wh R} \}
$$
and for the corresponding frequencies we have $\omega_r=\tfrac12|\kappa_r|^2$ for $r=1,\dots,\wh R$.

We iterate the map $K\mapsto \wh K$: Starting from $K_0=\{\kappa_1,\dots,\kappa_M\}$, we define the set $K_{k+1}=\wh K_k$ for $k=0,1,2,\dots$
We make the following assumption.

\medskip\noindent
{\bf Assumption 1.} (Saturation condition) {\it There exists an integer $k_{\star}\ge 0$ such that 
    ${K}_{k_{\star}+1} = {K}_{k_{\star}}$.
    }

\medskip\noindent
In the following we write $K={K}_{k_{\star}}$ and enumerate the elements of this set, which contains $\kappa_1,\dots,\kappa_M$:
$$
K=\{ \kappa_1,\dots,\kappa_M,\kappa_{M+1},\dots,\kappa_R \}
$$
For $r=1,\dots,R$, the frequency associated with 
$\kappa_r$ is then
$\omega_r=\tfrac12|\kappa_r|^2$.  

\medskip\noindent
{\it Remark.} (a) In the case of dimension $d=1$, we have generically $k_\star=0$. Rule (i) cannot add new resonant wave numbers, as is seen from the basic formula
\begin{equation}\label{res-formula}
\tfrac12(\kappa_i-\kappa_j+\kappa_k)^2 - (\tfrac12\kappa_i^2-\tfrac12\kappa_j^2+\tfrac12\kappa_k^2) = (\kappa_i-\kappa_j)(\kappa_k-\kappa_j).
\end{equation}
Rule (ii) can add new resonant wave numbers only in exceptional cases. Using \eqref{res-formula} twice, we found that this can happen only for a two-parameter family of quintuples $(\kappa_i,\kappa_j,\kappa_k,\kappa_p,\kappa_q)$.

(b) The situation is much more intricate in higher dimensions $d>1$; see the discussion in \cite{carles2010multiphase}. Note that the higher-dimensional version of
\eqref{res-formula},
$$
\tfrac12|\kappa_i-\kappa_j+\kappa_k|^2 
- (\tfrac12|\kappa_i|^2-\tfrac12|\kappa_j|^2+\tfrac12|\kappa_k|^2) 
= (\kappa_i-\kappa_j)\cdot (\kappa_k-\kappa_j)
$$
with the Euclidean inner product $\cdot$ on the right-hand side, does no longer imply that
$\kappa_i=\kappa_j$ or $\kappa_k=\kappa_j$ when the left-hand side vanishes. 
However, $k_\star=0$ still appears to be generic.

\medskip\noindent
We let $\mathcal{N}$ be the set of all multi-indices $\nu=(i,j,k)$ with $i,j,k\in\{1,\dots,R\}$ that are nonresonant, i.e., with $\kappa_\nu=\kappa_i-\kappa_j+\kappa_k$ and $\omega_\nu=\omega_i-\omega_j+\omega_k$ we have $\omega_\nu \ne \tfrac12 |\kappa_\nu|^2$.
For $\nu\in \mathcal{N}$, we let $\omega_\nu^\star= \tfrac12 |\kappa_\nu|^2$ and
$\delta_\nu= \omega_\nu-\omega_\nu^\star\ne 0$.

\medskip\noindent
{\bf Assumption 2.} (Nonresonance condition) {\it For all $\nu\in \mathcal{N}$ and all $p,q,r=1,\dots,R$ with $q\ne r$,}
$$
    \omega_\nu^\star - \omega_q + \omega_r \ne \tfrac12 |\kappa_\nu - \kappa_q + \kappa_r |^2 
    \quad\text{and}\quad
    \omega_p - \omega_\nu^\star + \omega_r \ne \tfrac12 |\kappa_p - \kappa_\nu  + \kappa_r |^2 .
$$

In dimension $d=1$, it follows from \eqref{res-formula} that this condition is always satisfied. In higher dimensions, the condition still appears to be generic; 
cf.~\cite{carles2010multiphase}.

The following theorem provides an $\mathcal{O}(\eps^2)$ approximation to the solution of \eqref{eq:schr} with multiphase initial data \eqref{eq:init}.


\begin{theorem}[Modulated Fourier expansion for multiphase initial data]
\label{th:MFE}
Let $u(t,x)$ be the solution to~\eqref{eq:schr} with initial data given by~\eqref{eq:init}, with wave vectors $\kappa_m$ for which Assumptions 1 and 2 are fulfilled. Then, $u(t,x)$ admits a modulated Fourier expansion
\[
u(t,x) = u_{\mathrm{MFE}}(t,x)+ e(t,x),
\]
where, with the notation introduced above and with modulation functions $a_r$, $b_\nu^\star$ and $b_\nu$ defined below,
\begin{align*}
u_{\mathrm{MFE}}(t,x)=& \sum_{r=1}^R a_r(t,x)\, \mathrm{e}^{\iu (\kappa_r \cdot x - \omega_r t)/\eps} \\
&+\eps\lambda \sum_{\nu\in\mathcal{N}} b_\nu^\star(t,x)\, \mathrm{e}^{\iu (\kappa_\nu \cdot x - \omega_\nu^\star t)/\eps}
+\eps\lambda \sum_{\nu\in\mathcal{N}} b_\nu(t,x)\, \mathrm{e}^{\iu (\kappa_\nu \cdot x - \omega_\nu t)/\eps}.
\end{align*}
The functions $a_r(t,x)$ solve the system of advected nonlinear Schr\"odinger equations
\begin{align}\label{eq:a_r}
&\partial_t a_{r} + \kappa_r \cdot\nabla_x a_{r} - \tfrac12{\iu\eps}\Delta_x a_{r} = -\iu\lambda \sum_{\genfrac{}{}{0mm}{}{i,j,k=1,\dots, R:}{\kappa_i-\kappa_j+\kappa_k=\kappa_r}} a_{i} \bar{a}_{j} a_{k}
\\
\nonumber
&\qquad\quad - 2 \iu\eps\lambda^2 \!\!\!\sum_{\genfrac{}{}{0mm}{}{\nu \in \mathcal{N}, p,q=1,\dots, R :}{\kappa_\nu-\kappa_p+\kappa_q=\kappa_r}}
b_\nu  \bar{a}_{p} a_{q}
-\iu \eps\lambda^2 \!\!\!\sum_{\genfrac{}{}{0mm}{}{\nu \in \mathcal{N}, p,q=1,\dots, R :}{\kappa_p-\kappa_\nu+\kappa_q=\kappa_r}}
a_{p} \bar{b}_{\nu} a_q,
\\[1mm] \nonumber
&a_{r}(0,x)=a^0_{r}(x),
\end{align}
where the initial data $a_r^0(x)$ are given for $r=1,\dots,M$ and are set to zero for $r=M+1,\dots,R$.

The functions $b_\nu(t,x)$ for $\nu=(i,j,k)\in \mathcal{N}$ are 
defined by the formula, with $\delta_\nu= \omega_\nu-\omega_\nu^\star\ne 0$,
\begin{equation}\label{eq:b_nu}
b_\nu(t,x) = \frac{1}{\delta_\nu} (a_{i} \bar{a}_{j} a_{k})(t,x) ,
\end{equation}
which turns \eqref{eq:a_r} into a quintic nonlinear Schr\"odinger equation. The function $b_\nu$ is an $\mathcal{O}(\eps)$ approximation to the solution of the advected linear Schr\"odinger equation
$$
\partial_t b_\nu+\kappa_\nu\cdot \nabla_x b_\nu - \tfrac12{\iu\eps}\Delta_x b_\nu
-\frac{\iu\delta_\nu}{\eps}\,b_\nu = -\frac\iu\eps\, a_{i} \bar{a}_{j} a_{k}, \quad b_\nu(0,x)=\frac{1}{\delta_\nu} (a_{i} \bar{a}_{j} a_{k})(0,x) .
$$
The  function $b_\nu^\star$ is 
the solution of the advected linear Schr\"odinger equation
$$
\partial_t b_\nu^\star +\kappa_\nu\cdot \nabla_x b_\nu^\star 
- \tfrac12{\iu\eps}\Delta_x b_\nu^\star = -\iu\lambda \,
2\sum_{r=1}^R |a_r|^2\, b_\nu^\star, 
\ \quad\ b_\nu^\star(0,x)=-b_\nu(0,x).
$$
The modulation functions $a_r$, $b_\nu$ and $b_\nu^\star$ are smooth in the sense that the functions and all their partial derivatives of arbitrary order are bounded independently of $\eps$ and $(t,x)\in [0,T]\times \mathbb{T}^d$ for a time $T>0$ on which a solution of \eqref{eq:a_r} with \eqref{eq:b_nu} exists.
With these functions $a_r$, $b_\nu$ and $b_\nu^\star$, the error $e(t,x)$ is bounded by
$$
\| e \|_{C([0,T] \times \mathbb{T}^d)} \leq C\eps^2.
$$
Both $T$ and $C$ are independent of $\eps\in (0,\eps_0)$ for some $\eps_0>0$,
but they depend on the given wave vectors $\kappa_m$, and $C$ depends  on $T$.
\end{theorem}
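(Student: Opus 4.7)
The plan is to build $u_{\mathrm{MFE}}$ by a formal phase-matching procedure in the NLS equation, and then estimate the error $u-u_{\mathrm{MFE}}$ by a Gr\"onwall argument in the Wiener algebra $A(\mathbb{T}^d)$.

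First I would insert the ansatz for $u_{\mathrm{MFE}}$ into the operator $\iu\eps\partial_t+\tfrac{\eps^2}{2}\Delta$. Applied to $a_r(t,x)\,\e^{\iu(\kappa_r\cdot x-\omega_r t)/\eps}$, the $\mathcal{O}(1)$ terms cancel by the dispersion relation $\omega_r=\tfrac12|\kappa_r|^2$, leaving $\iu\eps(\partial_t+\kappa_r\cdot\nabla_x)-\tfrac{\eps^2}{2}\Delta$ acting on $a_r$ and multiplied by the phase. The same cancellation works for the $b_\nu^\star$-component because $\omega_\nu^\star=\tfrac12|\kappa_\nu|^2$; for the $b_\nu$-component, however, the mismatch $\delta_\nu=\omega_\nu-\omega_\nu^\star\ne 0$ produces a leading $\mathcal{O}(\eps)$ algebraic term $\eps\lambda\,\delta_\nu b_\nu$ times the phase. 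On the right, I would expand $|u_{\mathrm{MFE}}|^2u_{\mathrm{MFE}}$ and group its terms according to the phase factor $\e^{\iu(\kappa\cdot x-\omega t)/\eps}$ they carry, producing $\mathcal{O}(1)$ cubic products in the $a_r$'s, $\mathcal{O}(\eps)$ cross-terms containing exactly one $b_\nu$ or $b_\nu^\star$ factor, and an $\mathcal{O}(\eps^2)$ remainder from quintic terms and from cubic terms with two $b$-factors.

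Equating coefficients phase by phase then determines the modulation equations. For phases $(\kappa_r,\omega_r)$ with $\kappa_r\in K$, the resonant triples $(i,j,k)$ with $\kappa_i-\kappa_j+\kappa_k=\kappa_r$ and $\omega_i-\omega_j+\omega_k=\omega_r$ supply the cubic source in \eqref{eq:a_r}, and the $\mathcal{O}(\eps)$ cross-terms give the $b_\nu$-sums there; Assumption~1 is exactly the closure property that keeps every such resonant $\kappa_r$ inside the finite list $K$. For phases $(\kappa_\nu,\omega_\nu)$ with $\nu=(i,j,k)\in\mathcal{N}$, the leading balance $\delta_\nu b_\nu=a_i\bar a_j a_k$ yields the algebraic formula \eqref{eq:b_nu}. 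For phases $(\kappa_\nu,\omega_\nu^\star)$, the linear Schr\"odinger equation for $b_\nu^\star$ is read off from the resonant cross-terms $b_\nu^\star\bar a_r a_r$; the coefficient $2\sum_r|a_r|^2$ is forced by Assumption~2, which rules out further resonances of the form $\omega_\nu^\star-\omega_q+\omega_r=\tfrac12|\kappa_\nu-\kappa_q+\kappa_r|^2$ with $q\ne r$. The initial condition $b_\nu^\star(0,\cdot)=-b_\nu(0,\cdot)$ is chosen so that $u_{\mathrm{MFE}}(0,\cdot)=u(0,\cdot)$. Local existence of $(a_r,b_\nu,b_\nu^\star)$ on an $\eps$-independent interval $[0,T]$, together with $\eps$-uniform smooth bounds on all partial derivatives, follows from a standard fixed-point argument for the quintic NLS system obtained by substituting \eqref{eq:b_nu} into \eqref{eq:a_r}.

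For the error bound, define $\mathrm{Res}:=\iu\eps\partial_t u_{\mathrm{MFE}}+\tfrac{\eps^2}{2}\Delta u_{\mathrm{MFE}}-\eps\lambda|u_{\mathrm{MFE}}|^2u_{\mathrm{MFE}}$. By the construction above, every phase coefficient in $\mathrm{Res}$ is of order $\eps^3$: the would-be $\mathcal{O}(\eps^2)$ contributions come from $\partial_t b_\nu,\Delta b_\nu$, from quintic products, and from cubic terms with two $b$-factors, and Assumptions~1--2 guarantee that each of these either cancels against a constructed equation or stays genuinely nonresonant, hence is absorbed into an $\mathcal{O}(\eps^3)$ residual. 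Thus $\|\mathrm{Res}\|_{C([0,T],A(\mathbb{T}^d))}=\mathcal{O}(\eps^3)$. The error $e=u-u_{\mathrm{MFE}}$ with $e(0,\cdot)=0$ satisfies $\iu\eps\partial_t e+\tfrac{\eps^2}{2}\Delta e=\eps\lambda(|u|^2u-|u_{\mathrm{MFE}}|^2u_{\mathrm{MFE}})+\mathrm{Res}$. Applying Duhamel's formula, the unitarity of the free Schr\"odinger propagator on $A(\mathbb{T}^d)$, the Banach-algebra estimate $\|fg\|_A\le\|f\|_A\|g\|_A$, and Gr\"onwall's lemma yields $\|e(t,\cdot)\|_{A(\mathbb{T}^d)}\le C\eps^2$ on $[0,T]$, and the embedding $\|\cdot\|_C\le\|\cdot\|_A$ gives the stated maximum-norm bound. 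The hard part will be the combinatorial book-keeping in the coefficient matching: one must verify systematically that each of the many cubic and quintic phase-indexed terms produced by $|u_{\mathrm{MFE}}|^2u_{\mathrm{MFE}}$ is consumed by the equation for some $a_r$, by the algebraic identity for some $b_\nu$, or by the linear equation for some $b_\nu^\star$, and that every orphan term is demonstrably $\mathcal{O}(\eps^3)$; Assumptions~1 and~2 are precisely what allow this sorting to terminate at second order without introducing further modulation functions at new resonant wave vectors.
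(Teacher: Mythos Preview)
Your overall scaffold---phase-matching to derive the modulation equations, then Duhamel plus Gr\"onwall in the Wiener algebra---matches the paper. But there is a genuine gap in the residual estimate: you claim $\|\mathrm{Res}\|_{C([0,T],A)}=\mathcal{O}(\eps^3)$, and this is false for the ansatz $u_{\mathrm{MFE}}$ as specified in the theorem. The residual is only $\mathcal{O}(\eps^2)$.

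Concretely, since $b_\nu$ is defined \emph{algebraically} by \eqref{eq:b_nu} rather than as the PDE solution, the transport term $\iu\eps^2\lambda\bigl(\partial_t b_\nu+\kappa_\nu\cdot\nabla_x b_\nu-\tfrac{\iu\eps}{2}\Delta_x b_\nu\bigr)\,\e^{\iu(\kappa_\nu\cdot x-\omega_\nu t)/\eps}$ is an honest $\mathcal{O}(\eps^2)$ contribution to $\mathrm{Res}$ that nothing cancels. Likewise, the cross-terms $\eps^2\lambda^2\, b_\nu^\star\bar a_p a_q$ with $p\ne q$ (precisely those excluded from the $b_\nu^\star$-equation by Assumption~2) and the nonresonant $b_\nu\bar a_p a_q$ terms remain in the residual at order $\eps^2$. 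Your sentence ``stays genuinely nonresonant, hence is absorbed into an $\mathcal{O}(\eps^3)$ residual'' conflates two different things: nonresonance does not shrink the pointwise size of a term.

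What nonresonance \emph{does} buy, and what the paper actually uses, is an extra factor of $\eps$ in the Duhamel integral. The residual has the structure $d=\iu\eps\bigl(\eps\sum_l c_l(t,x)\,\e^{\iu(\tilde\kappa_l\cdot x-\tilde\omega_l t)/\eps}+\mathcal{O}(\eps^2)\bigr)$ with smooth $c_l$ and \emph{nonresonant} $(\tilde\kappa_l,\tilde\omega_l)$, i.e.\ $\tilde\omega_l\ne\tfrac12|\tilde\kappa_l|^2$. One then invokes an oscillatory-integral lemma (integration by parts in $s$, as in Carles--Dumas--Sparber, Lemma~5.7): for nonresonant phases,
\[
\int_0^t \e^{\iu\eps(t-s)\Delta/2}\,c_l(s,\cdot)\,\e^{\iu(\tilde\kappa_l\cdot x-\tilde\omega_l s)/\eps}\,\d s=\mathcal{O}(\eps)\quad\text{in }A(\mathbb{T}^d).
\]
This is the missing step that converts the $\mathcal{O}(\eps)$ source $d/(\iu\eps)$ into an $\mathcal{O}(\eps^2)$ contribution to $e$, after which your Gr\"onwall argument goes through. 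Without it, a straight Gr\"onwall on an $\mathcal{O}(\eps^2)$ residual yields only $\|e\|\le C\eps$, one order short.
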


\begin{proof}
Let $d(t,x)$ be the defect of the approximate solution $u_{\text{MFE}}(t,x)$ in \eqref{eq:schr}, 
$$
d := \iu\eps\,\partial_t u_{\text{MFE}} + \frac{\eps^2}{2}\Delta u_{\text{MFE}} - \eps\lambda|u_{\text{MFE}}|^2 u_{\text{MFE}}.
$$
Inserting the expression for $u_{\mathrm{MFE}}(t,x)$, we compute
$$
\begin{aligned}
\iu\eps\,\partial_t u_{\text{MFE}} + \frac{\eps^2}{2} \Delta u_{\text{MFE}} 
=  \iu\eps\,\sum_{r=1}^R \left(\partial_t a_r +\kappa_r\,\nabla   a_r - \frac{\iu\eps}{2}\Delta a_r\right)\, \mathrm{e}^{\iu(\kappa_r \cdot x - \omega_r t)/\eps}
\\
+  \iu\eps^2\lambda \,\sum_{\nu\in\mathcal{N}} \left(\partial_t b_\nu + \kappa_\nu\,\nabla   b_\nu - \frac{\iu\eps}{2} \Delta b_\nu -\frac{\iu\delta_\nu}{\eps} b_\nu\right)\, \mathrm{e}^{\iu(\kappa_\nu\cdot  x - \omega_\nu t)/\eps}
\\
+  \iu\eps^2\lambda \,\sum_{\nu\in\mathcal{N}} \left(\partial_t b_\nu^\star + \kappa_\nu\,\nabla   b_\nu^\star - \frac{\iu\eps}{2} \Delta b_\nu^\star \right)\, \mathrm{e}^{\iu(\kappa_\nu\cdot  x - \omega_\nu^\star t)/\eps}.
\end{aligned}
$$
For the nonlinear term we have
\begin{align*}
&|u_{\text{MFE}}|^2 u_{\text{MFE}} = 
\sum_{i,j,k=1}^R a_i \bar{a}_j a_k \e^{\iu(\kappa_{(i,j,k)}\cdot x - \omega_{(i,j,k)}t)/\eps}
\\
&+ \eps\lambda \sum_{\nu\in\mathcal{N}} \sum_{p,q=1}^R \left(
2 b_\nu \bar{a}_p a_q \e^{\iu(\kappa_{(\nu,p,q)}\cdot x - \omega_{(\nu,p,q)}t)/\eps} 
+a_p \bar{b}_\nu a_q \e^{\iu(\kappa_{(p,\nu,q)}\cdot x - \omega_{(p,\nu,q)}t)/\eps} \right)
\\
&+ \eps\lambda \sum_{\nu\in\mathcal{N}} \sum_{p,q=1}^R \left(
2 b_\nu^\star \bar{a}_p a_q \e^{\iu(\kappa_{(\nu,p,q)}\cdot x - (\omega_{(\nu,p,q)}-\delta_\nu)t)/\eps} 
+a_p \bar{b_\nu^\star} a_q \e^{\iu(\kappa_{(p,\nu,q)}\cdot x - (\omega_{(p,\nu,q)}+\delta_\nu)t)/\eps} \right)
\\
&+ \mathcal{O} (\eps^2).
\end{align*}
The equations for $a_r$, $b_\nu$ and $b_\nu^\star$ are constructed such that,
under Assumptions 1 and 2, the defect is of the form
$$
d = \iu\eps \left(\eps \sum_{l=1}^L c_l(t,x) \, \mathrm{e}^{\iu(\wt\kappa_l \cdot x - \wt\omega_l t)/\eps} + \mathcal{O} (\eps^2)\right)
$$
with $\eps$-uniformly bounded and smooth modulation functions $c_l$ and {\it nonresonant} $(\wt\kappa_l,\wt\omega_l)$, i.e.,
$\wt\omega_l\ne \tfrac12 |\wt\kappa_l|^2$. Since all modulation functions are spatially smooth, the same argument as in the proof of
Lemma~\ref{lem:defect-wiener} shows that the above $\mathcal{O}(\eps^2)$-bounds are valid not only in the maximum norm, but also in the stronger norm of the Wiener algebra $A(\mathbb{T}^d)$.

The error $e = u - u_{\text{MFE}}$ then satisfies the equation
$$
\iu\eps\,\partial_t e + \frac{\eps^2}{2} \Delta e = \eps\lambda \left(|u|^2 u - |u_{\text{MFE}}|^2 u_{\text{MFE}} \right) - d.
$$
Applying Duhamel’s principle yields
\begin{align*}
e(t) = \e^{\iu\eps t\Delta/2} e(0)
&- \iu\lambda \int_0^t \e^{\iu\eps(t-s)\Delta/2} \left(|u|^2u - |u_{\mathrm{MFE}}|^2u_{\mathrm{MFE}}\right)\d s\\
&+ \iu\lambda \int_0^t \e^{\iu\eps(t-s)\Delta/2} \,\frac{d(s,\cdot)}{\iu\eps}\,\d s .
\end{align*}
By the partial integration argument of Lemma 5.7 of \cite{carles2010multiphase}, we obtain in the present nonresonant situation
$$
\int_0^t \e^{\iu\eps(t-s)\Delta/2} c_l(s,x) \, \mathrm{e}^{\iu(\wt\kappa_l \cdot x - \wt\omega_l s)/\eps} \,\d s = \mathcal{O}(\eps)
$$
in the Wiener algebra $A(\mathbb{T}^d)$, and so we have
$$
\biggl\|\, \int_0^t \e^{\iu\eps(t-s)\Delta/2} \,\frac{d(s,\cdot)}{\iu\eps}\,\d s 
\,\biggr\|_{A(\mathbb{T}^d)} \le C_0 \eps^2.
$$
Using $e(0)=0$ and trilinear estimates in $A(\mathbb{T}^d)$, we then obtain
$$
\begin{aligned}
\|e(t)\|_{A(\mathbb{T}^d)} 
&\leq C_1 \int_0^t \|e(s)\|_{A(\mathbb{T}^d)} \,\mathrm{d}s + C_0  \eps^2.
\end{aligned}
$$
The stated bound then follows from Gronwall’s inequality.
\end{proof}

 \section{General multiphase problem: numerical method and error bound}\label{sec:multi-scheme}
In view of Theorem~\ref{th:MFE}, the exponentially weighted leapfrog method extends from the particular two-phase case in \eqref{eq:scheme1-two} to the general multiphase situation as follows. 
With the notation of the two previous sections, we approximate
$$
u(t_n,x_j)\approx u^n_j = \sum_{r=1}^R u_{[r],j}^n + 
\sum_{\nu\in\mathcal{N}} w_{[\nu],j}^{\star,n}+\sum_{\nu\in\mathcal{N}} w_{[\nu],j}^{n}
$$
where $u_{[r],j}^n$ is to approximate $u_r(t,x)=a_r(t,x)\, \mathrm{e}^{\iu (\kappa_r\cdot x - \omega_r t)/\eps}$, $w_{[\nu],j}^{\star,n}$ is to approximate $w^\star_\nu(t,x)=b^\star_\nu(t,x)\, \mathrm{e}^{\iu (\kappa_\nu\cdot x - \omega^\star_\nu t)/\eps}$, and $w_{[\nu],j}^n$ is to approximate $w_\nu(t,x)=b_\nu(t,x)\, \mathrm{e}^{\iu (\kappa_\nu\cdot x - \omega_\nu t)/\eps}$
at $t=t_n$ and $x=x_j$. In the following, let $\alpha_r=\omega_r\tau/\eps$, $\beta_r=\kappa_r h/\eps$, $\alpha^\star_\nu=\omega^\star_\nu\tau/\eps$, and $\beta_\nu=\kappa_\nu h/\eps$. We compute $u^{n+1}_j$ as follows for $r=1,\dots,R$ and $\nu=(k,l,m)\in \mathcal{N}$. With the weighted leapfrog finite difference operator of Section~\ref{subsec:wfdm} (or its obvious generalization to higher dimensions $d>1$), we consider the scheme
\[
\begin{aligned}
&\mathcal{D}_{\mathrm{LF}}^{\alpha_r,\beta_r}u_{[r]}\big|^{n}_j = \eps\lambda
\sum_{\genfrac{}{}{0mm}{}{k,l,m=1,\dots, R:}{\kappa_k-\kappa_l+\kappa_m=\kappa_r}} u^n_{[k],j} \bar{u}^n_{{[l],j}} u^n_{{[m],j}}+ 
\chi\eps\lambda
\sum_{\genfrac{}{}{0mm}{}{k,l=1,\dots, R:}{(k,l,r)\in\mathcal{N}}} u^n_{[k],j} \bar{u}^n_{{[l],j}} u^n_{{[r],j}}
\\[1mm]
&\qquad\quad + 2 \eps\lambda \!\!\!\sum_{\genfrac{}{}{0mm}{}{\nu \in \mathcal{N}, p,q=1,\dots, R :}{\kappa_\nu-\kappa_p+\kappa_q=\kappa_r}}
w_{[\nu],j}^n  \bar{u}_{[p],j}^n u_{[q],j}^n
+ \eps\lambda \!\!\!\sum_{\genfrac{}{}{0mm}{}{\nu \in \mathcal{N}, p,q=1,\dots, R :}{\kappa_p-\kappa_\nu+\kappa_q=\kappa_r}}
u_{[p],j}^n \bar{w}_{[\nu],j}^n u_{[q],j}^n
\\[1mm]
&\text{with }\  w^{n}_{[\nu],j} =(1-\chi)\, \frac{\eps\lambda}{\delta_\nu} \, u^{n}_{[k],j} \bar{u}^{n}_{[l],j} u^{n}_{[m],j},
\\[1mm]
&\mathcal{D}_{\mathrm{LF}}^{\alpha^\star_\nu,\beta_\nu}w^\star_{[\nu]}\big|^{n}_j = 2 \eps\lambda \sum_{r=1}^R |u_{[r],j}|^2\, w_{[\nu],j}^{\star,n}, 
\end{aligned}
\]
where $\chi = 1$ if ${h^2\leq c\,\eps^5}$ and zero otherwise. The initial conditions are given by
\[
\begin{aligned}
    u_{[r]}(0,x)&=a^0_{r}(x)\, \mathrm{e}^{\iu \kappa_r\cdot x / \eps}
    \quad\text{for $r=1,\dots,M$ and zero otherwise,}
    \\
    w_{[\nu]}(0,x)&=(1-\chi)\frac{\eps\lambda}{\delta_\nu}u_{[k]}\bar{u}_{[l]}u_{[m]}(0,x)\\
    w^\star_{[\nu]}(0,x)&=-w_{[\nu]}(0,x).
\end{aligned}
\]
An analogous formula holds for the exponentially weighted Crank--Nicolson method; cf.~\eqref{eq:scheme2-two}.

The following theorem provides an $\mathcal{O}(\tau^2 + h^2+\eps^2)$ error bound in the maximum norm for the numerical solutions of the above weighted leapfrog and Crank--Nicolson methods, which extends to an $\eps$-uniform
$\mathcal{O}(\tau^{4/5} + h^{4/5})$ error bound.

\begin{theorem}[Error bound for the weighted leapfrog and Crank-Nicolson methods]\label{th:conv-m}
Under the assumptions of Theorem~\ref{th:MFE}, and in the case of the leapfrog scheme subject to the stability condition $\eps\tau< h^2/\gamma(\beta_\mu)$ for all $\mu\in\{1,\dots,R\}\cup\mathcal{N}$ where $\beta_\mu=\kappa_\mu h/\eps$ and $\gamma$ is defined by \eqref{eq:stability}, the error is bounded by
\[
|u^n_j - u(t_n, x_j)| \leq \min\left(C_0(\tau^2 + h^2 + \eps^2),\; C_1\frac{\tau^2 + h^2}{\eps^3}\right) \leq C(\tau^{4/5} + h^{4/5}),
\]
uniformly for $t_n = n\tau \leq T$ and $x_j = jh$ and $0<\eps\le 1$. The constants $C_0,C_1$ and $C$ are independent of $\eps$, the time step $\tau$, and the mesh size $h$, and independent of $j$ and $n$ for $t_n \leq T$.
\end{theorem}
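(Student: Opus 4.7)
The plan is to prove two complementary error bounds for $u_{j}^{n}-u(t_{n},x_{j})$ and take their minimum. The main bound $\mathcal{O}(\tau^{2}+h^{2}+\eps^{2})$ exploits the modulated Fourier expansion of Theorem~\ref{th:MFE}; the secondary bound $\mathcal{O}((\tau^{2}+h^{2})/\eps^{3})$ is the classical Taylor estimate that becomes relevant only when $\tau^{2}+h^{2}\lesssim\eps^{5}$. Setting the two bounds equal gives the crossover $\eps\asymp(\tau^{2}+h^{2})^{1/5}$, and at this balance both have size $(\tau^{2}+h^{2})^{2/5}\le\tau^{4/5}+h^{4/5}$, yielding the claimed $\eps$-uniform rate.

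For the primary bound, Theorem~\ref{th:MFE} reduces the task to showing $\max_{n,j}|u_{j}^{n}-u_{\mathrm{MFE}}(t_{n},x_{j})|\le C(\tau^{2}+h^{2})$. I would insert the components $u_{r}$, $w_{\nu}^{\star}$ and $w_{\nu}$ of $u_{\mathrm{MFE}}$ into the corresponding equations of the scheme. Each is a modulated exponential at its own $(\kappa_{\mu},\omega_{\mu})$ with an $\eps$-uniformly smooth modulation function ($a_{r}$, $b_{\nu}^{\star}$ or $b_{\nu}$). At the matching pair $(\alpha_{\mu},\beta_{\mu})$, the weighted-finite-difference identities of Section~\ref{subsec:prep} reduce each weighted stencil to an ordinary symmetric finite difference on the modulation plus an $\mathcal{O}(\tau^{2}+h^{2})$ Taylor remainder, mirroring Lemma~\ref{lem:defect-wiener}. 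Under Assumptions 1--2, the amplitude systems \eqref{eq:a_r}, \eqref{eq:b_nu} and the $b_{\nu}^{\star}$ equation are built so as to cancel every resonant triad produced by the scheme's nonlinearity; the nonresonant residual triads are of amplitude $\mathcal{O}(\eps^{2})$ and are handled by the $\chi$-switch that moves them between the algebraic $w_{[\nu]}$ substitution ($\chi=0$) and the explicit update ($\chi=1$). The resulting componentwise consistency defect is $\mathcal{O}(\eps(\tau^{2}+h^{2}))$ in the Wiener algebra $A(\mathbb{T}^{d})$.

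Stability is obtained by applying Lemma~\ref{th:stability} (respectively Lemma~\ref{lem:stability-cn}) block by block over all components $\mu\in\{1,\dots,R\}\cup\mathcal{N}$, which is precisely why the hypothesis $\eps\tau< h^{2}/\gamma(\beta_{\mu})$ is imposed for every such $\mu$. The nonlinear couplings, the algebraic $w_{[\nu]}$ substitution when $\chi=0$, and the switched triads when $\chi=1$ are all bounded in $A(\mathbb{T}^{d})$ via the product inequality \eqref{eq:tri}, exactly as in Lemma~\ref{lem:stability}; a coupled discrete Gronwall inequality on the full error vector absorbs the $\eps^{-1}$ defect-to-error factor and delivers the $\mathcal{O}(\tau^{2}+h^{2})$ bound, hence the first estimate after adding $\|u-u_{\mathrm{MFE}}\|_{\infty}\le C\eps^{2}$. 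The secondary bound is then obtained by inserting $u(t,x)$ directly into the scheme and Taylor expanding the weighted stencils. This is justified because the bound is only needed in the regime $\tau^{2}+h^{2}\lesssim\eps^{5}$, where $\alpha_{\mu},\beta_{\mu}=o(1)$ and the weights themselves admit Taylor expansions; the derivative bounds $\|\partial_{t}^{k}u\|_{\infty},\|\partial_{x}^{k}u\|_{\infty}=\mathcal{O}(\eps^{-k})$ then give the standard consistency estimate and, after the $\eps^{-1}$ from stability, the asserted $\mathcal{O}((\tau^{2}+h^{2})/\eps^{3})$.

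The main obstacle will be the book-keeping for the coupled multi-component stability argument with $\chi$-switching: one must verify that the algebraic substitution $w_{[\nu]}=(\eps\lambda/\delta_{\nu})u_{[k]}\bar{u}_{[l]}u_{[m]}$ when $\chi=0$ supplies exactly the $\eps$ factor needed to match scales in the various coupling terms, while when $\chi=1$ the residual nonresonant triads can be absorbed into the $\mathcal{O}(\eps^{2})$ error budget because in that regime the Taylor bound takes over. Apart from this, the argument is a direct multi-component extension of the single-phase analysis of Sections~\ref{sec:consistency}--\ref{sec:stability}, carried out in a block-diagonal Fourier basis indexed by all wavevectors in $K\cup\{\kappa_{\nu}:\nu\in\mathcal{N}\}$, with each block inheriting its stability properties from the corresponding single-phase lemma.
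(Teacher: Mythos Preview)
Your plan is correct and matches the paper's proof in substance: both combine the MFE-based bound $\mathcal{O}(\tau^{2}+h^{2}+\eps^{2})$ (Theorem~\ref{th:MFE} plus a componentwise repetition of the Theorem~\ref{th:conv-1} argument) with the classical Taylor bound $\mathcal{O}((\tau^{2}+h^{2})/\eps^{3})$, and balance them at $\eps\asymp(\tau^{2}+h^{2})^{1/5}$ to get the uniform $4/5$ rate.

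The paper's organization is cleaner than yours in one respect. You treat the $\chi$-switch as a mechanism to be tracked \emph{inside} a single consistency--stability argument, and flag the resulting multi-component book-keeping as the main obstacle. The paper instead uses $\chi$ as a hard case split: when $\chi=0$ (i.e.\ $h^{2}>c\eps^{5}$) only the MFE argument is run; when $\chi=1$ (i.e.\ $h^{2}\le c\eps^{5}$) only the Taylor argument is run. The simplification you do not state is that when $\chi=1$, the factor $(1-\chi)$ forces $w_{[\nu]}^{n}\equiv 0$ identically; hence $w_{[\nu]}^{\star,0}=-w_{[\nu]}^{0}=0$, and since the $w^{\star}$ equation is linear homogeneous in $w^{\star}$, also $w_{[\nu]}^{\star,n}\equiv 0$ for all $n$. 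The $\chi$-term in the $u_{[r]}$ equation is designed so that, after this vanishing, summing the component equations over $r$ restores the full cubic nonlinearity $|u|^{2}u$, and the scheme collapses to the ordinary leapfrog/Crank--Nicolson method applied to $u=\sum_{r}u_{[r]}$. The Taylor bound then follows directly, without any multi-component tracking. So the ``main obstacle'' you anticipate largely dissolves once the two $\chi$-cases are separated and handled independently.
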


\begin{proof} Theorem~\ref{th:conv-m} is proved by combining the proofs of Theorems~\ref{th:conv-1} and~\ref{th:MFE}.
We consider two cases based on the relative value of $h^2$ and $\eps^5$:

\begin{enumerate}
    \item $h^2 \ge c\,\eps^5$, i.e., $\chi = 0$. \\
    In this case, proceeding as in the proof of Theorem~\ref{th:conv-1} for each component $u^n_{[r],j}$, $w^{\star,n}_{[\nu],j}$ and $w^n_{[\nu],j}$ of the numerical solution, we obtain that the difference between the numerical solution and the modulated Fourier expansion of the exact solution is bounded by
    \begin{align*}
         |u^{n}_j - u_{\rm MFE}(t_n,x_j)| = 
    \mathcal{O}(\tau^2 + h^2)
    \end{align*}
    uniformly for $j$ and $n$ with $t_n\le T$.
    Together with Theorem~\ref{th:MFE}, this yields the maximum norm error bound
    \[
    |u^{n}_j - u(t_n,x_j)| = \mathcal{O}(\tau^2 + h^2 + \eps^2) 
    \]
    uniformly for $j$ and $n$ with $t_n\le T$.

    \item $h^2 \leq c\,\eps^5$, i.e., $\chi = 1$. \\
    In this regime, $w^\star_{[\nu]}=w_{[\nu]}=0$, the scheme reduces to a standard leapfrog or Crank--Nicolson algorithm. By summing the equations and applying Taylor expansion,  we obtain the error bound
    \[
    |u^{n}_j - u(t_n,x_j)| = \mathcal{O}\left(\frac{\tau^2 + h^2}{\eps^3} \right).
    \]
\end{enumerate}
Combining the two cases yields the stated $\eps$-uniform error bound.
\end{proof}

\color{black}

\bibliographystyle{amsplain}
\bibliography{ref}
\end{document}